\newtheorem{theorem}{Theorem}[section]
\newtheorem{lemma}[theorem]{Lemma}
\newtheorem{corollary}[theorem]{Corollary}
\newtheorem{proposition}[theorem]{Proposition}
\theoremstyle{remark}
\newtheorem{remark}[theorem]{Remark}
\numberwithin{equation}{section}
\newcommand{\T}{\mathbb{T}}
\newcommand{\cT}{\mathcal T}
\newcommand{\cP}{\mathcal P}
\newcommand{\R}{\mathbb{R}}
\newcommand{\N}{\mathbb{N}}
\newcommand{\cO}{\mathcal O}
\newcommand{\cU}{\mathcal U}
\newcommand{\eps}{\varepsilon}
\renewcommand{\phi}{\varphi}
\begin{document}

\title[Smooth rigidity for 3-dimensional volume preserving Anosov flows]{Smooth rigidity for 3-dimensional volume preserving Anosov flows and weighted marked length spectrum rigidity}

\author {Andrey Gogolev and Federico Rodriguez Hertz}\thanks{The authors were partially supported by NSF grants DMS-1955564 and DMS-1900778, respectively}

 \address{Department of Mathematics, The Ohio State University,  Columbus, OH 43210, USA}
\email{gogolyev.1@osu.edu}

\address{Department of Mathematics, The Pennsylvania State University,  University Park, PA 16802, USA}
\email{hertz@math.psu.edu}

\begin{abstract} 
  \begin{sloppypar}
Let $X_1^t$ and $X_2^t$ be volume preserving Anosov flows on a 3-dimensional manifold $M$. We prove that if $X_1^t$ and $X_2^t$ are $C^0$ conjugate then the conjugacy is, in fact, smooth, unless $M$ is a mapping torus of an Anosov automorphism of $\mathbb T^2$ and both flows are constant roof suspension flows. We deduce several applications. Among them is a new result on rigidity of Anosov diffeomorphisms on $\mathbb T^2$ and a new ``weighted'' marked length spectrum rigidity result for surfaces of negative curvature.
  \end{sloppypar}
\end{abstract}
\maketitle

\maketitle

\section{Introduction}

Dynamics and geometry are different fields with distinct agendas. These fields frequently enrich each other, in particular, via the interplay between hyperbolic dynamics and  negative curvature as pioneered by Hopf and Anosov. Much less frequently a problem in one field could uncover serious lack of understanding and technical weakness in the other field. This is precisely the story of this paper. 

\medskip

\subsection{Definitions}

Let $M$ be a closed smooth Riemannian manifold. Recall that a diffeomorphism $f\colon M\to M$ is called {\it Anosov} if the tangent bundle admits a $Df$-invariant splitting $TM=E^s\oplus E^u$, where  $E^s$ is uniformly contracting and $E^u$ is uniformly expanding under $f$.  It is well known that all 2-dimensional Anosov diffeomorphisms are conjugate to Anosov automorphisms of $\mathbb T^2$~\cite{Fr}.

Similarly,
a smooth flow $X^t\colon M\to M$ is called {\it Anosov} if the tangent bundle admits a $DX^t$-invariant splitting $TM=E^s\oplus X\oplus E^u$, where $X$ is the generator of $X^t$, $E^s$ is uniformly contracting and $E^u$ is uniformly expanding under $DX^t$, $t>0$. Basic examples of 3-dimensional Anosov flows are geodesic flows on surfaces of negative curvature and suspension flows of Anosov diffeomorphisms of the 2-torus. Many more 3-dimensional examples can be constructed by various surgery techniques (see~\cite{Bar} for an overview).

Conjugacy and orbit equivalence are natural equivalence relations for dynamical systems. In particular, Anosov proved that $C^1$ close Anosov diffeomorphisms are conjugate and $C^1$ close Anosov flows are orbit equivalent~\cite{An}. (Recall that flows $X_1^t$ and $X_2^t$ are {\it orbit equivalent} if there exists a homeomorphism $H\colon M\to M$ which sends orbits of $X_1^t$ to the orbits of $X_2^t$ preserving the time direction.) Anosov also observed that, in general, the continuous conjugacy or the orbit equivalence is not $C^1$ or even Lipschitz.

\subsection{Structural stability and obstructions to smooth conjugacy}

In the case of diffeomorphisms the basic obstructions to $C^1$ regularity of the conjugacy are given by eigenvalues at periodic points. The natural question is whether matching of these obstructions could guarantee $C^1$ or higher regularity of the conjugacy. One can ask for as much as coincidence of Jordan normal forms or as little as coincidence of full Jacobians at corresponding periodic points.

In the case of flows, first there are obstructions to orbit equivalence being a conjugacy of flows $H$ (that is, $H\circ X_1^t=H\circ X_2^t$ for all $t$). These obstructions are given by the periods of corresponding periodic orbits. If the flows are transitive and these obstructions vanish then the orbit equivalence can be improved to a conjugacy by adjusting this orbit equivalence along the orbits of the flows~~\cite[Theorem 19.2.9]{KH}. Once one has a true $C^0$ conjugacy $H$, the obstructions to $C^1$ regularity are, similarly to the diffeomorphisms case, given by the eigenvalues of the linearized Poincar\'e return maps at the corresponding periodic points.

\subsection{Smooth rigidity in low dimension: prior results} The main question of the smooth rigidity program in rank one is whether matching of obstructions implies regularity of the conjugacy. In low dimensions, that is dimension 2 for diffeomorphisms and dimension 3 for flows, this question was extensively studied by de la Llave, Marco, Moriy\'on and by Pollicott. Specifically, if two Anosov diffeomorphisms on $\mathbb T^2$ are conjugate and both the stable and unstable eigenvalues at corresponding periodic points are equal (vanishing of obstructions) then the conjugacy is smooth~\cite{dlL0, MM, dlL, Pol}. Similarly, if two Anosov flows are conjugate (equivalently, the periods of corresponding periodic points are equal) and the stable and unstable eigenvalues at corresponding periodic points are equal then the conjugacy is smooth~\cite{LM, dlL, Pol}.

 In fact, if diffeomorphisms (flows) have finite regularity $C^r$ (that is, $\lfloor r\rfloor$ times continuously differentiable and its $C^{\lfloor r\rfloor}$-differential is H\"older continuous with exponent $r-\lfloor r\rfloor$) then the conjugacy is $C^{r_*}$, where 
$r_*=r$ if $r\notin\N$ and $r_*=(r-1)+Lip$ if $r\in\N$. Such sharp regularity was achieved through the use of an analytic lemma, which was established on demand by Journ\'e~\cite{J} for these purposes.

\subsection{Smooth rigidity in low dimension: new results}

Our main result is the following improved smooth rigidity for flows.
\begin{theorem}
\label{thm2}
 Let $X_i^t\colon M\to M$, $i=1,2$, be $C^r$, $r>2$, volume preserving Anosov flows which are conjugate via a  conjugacy $H$. Then at least one of the following conclusions holds:
\begin{enumerate}
\item the conjugacy $H$ is a $C^{r_*}$ diffeomorphism; 
\item flows $X_i$ are constant roof suspensions of Anosov diffeomorphisms $f_i\colon\T^2\to\T^2$, $i=1,2$.
\end{enumerate}
\end{theorem}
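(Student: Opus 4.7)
The plan is to reduce Theorem~\ref{thm2} to the classical smooth rigidity results of de la Llave, Marco--Moriy\'on, and Pollicott recalled in the previous subsection, which upgrade a $C^0$ conjugacy to a $C^{r_*}$ diffeomorphism once the stable and unstable eigenvalues of the linearized Poincar\'e return maps coincide at every pair of corresponding periodic orbits. The novelty must therefore lie in extracting this eigenvalue matching from volume preservation alone.

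In dimension three, volume preservation forces $\lambda_i^u(\gamma)\,\lambda_i^s(\gamma)=1$ at every periodic orbit $\gamma$ of $X_i^t$, so matching of unstable eigenvalues is equivalent to matching of stable eigenvalues. Let $\phi_i^u\in C^{r-1}(M)$ denote the infinitesimal unstable expansion cocycle, so that $\int_\gamma\phi_i^u=\log\lambda_i^u(\gamma)$ on periodic orbits. Since $H$ is H\"older continuous, $\phi_2^u\circ H$ is a H\"older function on $M$, and by Livshitz's theorem the desired matching at all periodic orbits amounts to the cohomological identity
\begin{equation*}
\phi_1^u-\phi_2^u\circ H=X_1(u)
\end{equation*}
for some H\"older $u\colon M\to\R$.

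The hard part will be to establish this cohomological identity. I expect the argument to split along a dichotomy determined by whether $\phi_1^u$ is cohomologous over $X_1^t$ to a constant. If it is not, one exploits the Anosov--Sinai ergodicity of the Liouville measure $m_i$, the fact that $m_i$ is the unique SRB measure characterized by Pesin's formula $h_{m_i}(X_i^1)=\int\phi_i^u\,dm_i$, and the $X_2^t$-invariance of the pushforward $H_*m_1$, to identify $H_*m_1$ with $m_2$ and deduce the coboundary identity via a refined Livshitz-type argument. If instead $\phi_1^u$ is cohomologous to a constant, then classical rigidity forces $X_1^t$ to be smoothly equivalent to an algebraic model: either a geodesic flow on a closed negatively curved surface, or a constant-roof suspension of an Anosov automorphism of $\T^2$. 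The topology of $M$ together with the same analysis applied to $X_2^t$ partitions the situation into the two conclusions of the theorem: mapping tori over $\T^2$ produce alternative~(2), while unit tangent bundles are handled via marked length spectrum rigidity for hyperbolic surfaces combined with Ghys's theorem, yielding eigenvalue matching directly.

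Once eigenvalue matching holds at all pairs of corresponding periodic orbits, the results of de la Llave--Marco--Moriy\'on--Pollicott cited above, combined with Journ\'e's regularity lemma, deliver $H\in C^{r_*}$ directly.
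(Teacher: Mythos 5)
Your endgame (volume preservation reduces the problem to matching unstable eigenvalues, after which de la Llave--Marco--Moriy\'on--Pollicott plus Journ\'e give $C^{r_*}$) coincides with one branch of the paper's proof, but the actual core of the theorem --- extracting the eigenvalue matching from a mere $C^0$ conjugacy, or else landing in the constant-roof suspension case --- is missing, and the dichotomy you propose does not supply it. In the branch where $\phi_1^u$ is not cohomologous to a constant, the identification $H_*m_1=m_2$ is unjustified: a topological conjugacy only makes $H_*m_1$ \emph{some} invariant measure of $X_2^t$; Pesin's formula singles out $m_2$ via the exponents of $X_2^t$, which are not preserved by $H$, and in the classical scheme the equality $H_*m_1=m_2$ is deduced \emph{from} the assumed matching of periodic Jacobian data (both measures are then equilibrium states of cohomologous potentials). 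Assuming it here is circular --- it is essentially equivalent to the absolute continuity of $H$ that one is trying to prove --- and no ``refined Livshitz-type argument'' is indicated that could bridge this. In the other branch, the claim that ``$\phi_1^u$ cohomologous to a constant forces $X_1^t$ to be smoothly equivalent to an algebraic model'' is not a classical theorem: for general volume-preserving $3$-dimensional Anosov flows this is essentially the entropy-rigidity problem (Liouville $=$ Bowen--Margulis), known for contact flows (Foulon) but not in the generality you need. Moreover, even granting it, your hypothesis concerns only $\phi_1^u$ and says nothing about $\phi_2^u$; since $H$ preserves periods but not multipliers, eigenvalue matching for $X_2^t$ (e.g.\ in the geodesic-flow subcase) does not follow from marked length spectrum rigidity applied to $X_1^t$ alone.

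The mechanism the paper actually uses, and which is absent from your proposal, is the longitudinal Anosov cocycle $K_i$ together with a homoclinic approximation argument: Theorem~\ref{thm_tech} shows that whenever $K_i$ does not vanish on a periodic orbit, the unstable eigenvalue of that orbit is recoverable from the periods of periodic orbits shadowing a homoclinic orbit, via the expansion $T_n=nT_0+T'+c_nKn\mu^n+O(\mu^n)$; since $H$ preserves all periods, at every periodic orbit either the exponents match or one of $K_1$, $K_2\circ H$ vanishes there. The Alternate/Positive Proportion Livshits theorem then forces one of the three cocycles $K_1$, $K_2\circ H$, $\log J^uX_1^t-\log J^uX_2^t\circ H$ to be a coboundary. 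The last case is your dlL--MM--P endgame; $K_i$ a coboundary gives, via Foulon--Hasselblatt and Plante, either the suspension alternative or the contact case, which is settled by an entropy/Margulis--Ruelle argument combined with Feldman--Ornstein. Without this input (or a genuine proof of your two unjustified claims), the proposal has a real gap precisely at the step that distinguishes this theorem from the classical rigidity results.
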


In other words, we improve the smooth rigidity result for flows by showing that one can, in fact, discard the assumption of matching of the second collection of obstructions (eigenvalues) and conclude existence {\it and} smoothness of the conjugacy only using the first set of obstructions (periods) in all cases, but the constant roof suspension case. In the latter case, vanishing of eigenvalue obstructions is, clearly, a necessary assumption. If both flows are assumed to be contact then such rigidity was established by Feldman and Ornstein~\cite{FO}. The contact property ensures that the strong stable and unstable foliations are $C^1$ which then can be used to obtain $C^1$ regularity property of the conjugacy via the inverse function theorem. However, aside from contact flows and constant roof suspensions, the strong foliations of a volume preserving Anosov flow are merely H\"older.

We proceed with some applications.

\begin{corollary}
\label{thm1}
Let $f_i\colon\T^2\to\T^2$, $i=1,2$, be $C^r$, $r>2$, area preserving Anosov diffeomorphisms which are conjugate via a conjugacy $h$, $h\circ f_1=f_2\circ h$ and let $\phi_i\colon\T^2\to \R$ be $C^r$ smooth functions, $i=1,2$. Assume that for every periodic point $p=f_1^k(p)$ the following sums agree
$$
\sum_{i=0}^{k-1}\phi_1(f_1^i(p))=\sum_{i=0}^{k-1}\phi_2(f_2^i(h(p))
$$
Then at least one of the following holds:
\begin{enumerate}
\item conjugacy $h$ is a $C^{r_*}$ diffeomorphism;
\item the functions $\phi_i$ are cohomologous to a constant, $\phi_i=u_i\circ f_i-u_i+C$, $u_i\in C^{r_*}(\T^2)$.
\end{enumerate}
\end{corollary}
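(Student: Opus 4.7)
The plan is to lift the hypothesis to a conjugacy of suspension flows and invoke Theorem~\ref{thm2}. After adding a large positive constant to both $\phi_i$ (which preserves both the hypothesis and the form of the conclusion), we may assume $\phi_i > 0$, so the suspension flows $X_i^t$ of $f_i$ with roof $\phi_i$ are well-defined $C^r$ volume-preserving Anosov flows on $3$-manifolds $M_i$. Since $f_1$ and $f_2$ are $C^0$ conjugate, their induced actions on $H_1(\T^2)$ are $GL(2,\Z)$-conjugate, so $M_1$ and $M_2$ are smoothly diffeomorphic; fix such a diffeomorphism and regard both flows as living on a common manifold $M$.

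To produce a $C^0$ flow conjugacy, rewrite the hypothesis using $h\circ f_1 = f_2\circ h$ as $\sum_{j=0}^{k-1}(\phi_1 - \phi_2\circ h)(f_1^j(p)) = 0$ for every $f_1$-periodic $p$. Since $h$ is bi-H\"older (classical regularity for conjugacies of Anosov diffeomorphisms of $\T^2$), $\phi_1 - \phi_2\circ h$ is H\"older, and Livsic's theorem yields a continuous $u:\T^2 \to \R$ with $\phi_1 - \phi_2\circ h = u\circ f_1 - u$. The formula $H(x,s) := (h(x), s+u(x))$ then descends to a flow-conjugating homeomorphism $M \to M$.

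Apply Theorem~\ref{thm2}. In case (1), $H$ is a $C^{r_*}$ diffeomorphism; restricting $H$ to the cross-section $\T^2 \times \{0\}$ and composing with the local first-coordinate projection recovers $h$, which is therefore $C^{r_*}$. In case (2), both $X_i^t$ are constant roof suspensions with some roof constants $\tau_i > 0$. Because the monodromy is hyperbolic on $\T^2$, one has $H_2(M;\Z) = \Z$ and every oriented cross-section represents its positive generator; consequently every periodic orbit crosses any cross-section the same number of times, which forces $\sum_{j=0}^{k-1}\phi_i(f_i^j(p)) = k\tau_i$ on every $f_i$-periodic orbit of minimal period $k$. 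Period matching then gives $\tau_1 = \tau_2 =: C$, and the regularity Livsic theorem of de~la~Llave--Marco--Moriy\'on applied to $\phi_i - C$ yields $u_i \in C^{r_*}(\T^2)$ with $\phi_i = u_i \circ f_i - u_i + C$. The main subtlety is the homological identification of intersection numbers, which ensures the correct constant $C$ appears in Livsic and that the two constants from the two flows must agree.
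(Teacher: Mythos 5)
Your proposal is correct and follows essentially the same route as the paper: add a constant to make the roofs positive, suspend, use the Livshits theorem to upgrade the matching of Birkhoff sums to a flow conjugacy, and apply Theorem~\ref{thm2}; your additional bookkeeping (recovering $h$ from $H$ in case (1), and the homological plus smooth-Livshits argument in case (2)) simply fills in details the paper's one-paragraph proof leaves implicit. One small remark: you do not actually need every cross-section to represent the generator of $H_2(M;\Z)$ --- it suffices that the geometric intersection number of a closed orbit with any cross-section equals the homological pairing, hence is a fixed integer multiple of its intersection number $k$ with the fiber, which still forces the Birkhoff sums of $\phi_i$ to equal $k$ times a constant and the rest of your argument goes through unchanged.
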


\begin{proof}
Indeed, to see that Theorem~\ref{thm2} implies the above corollary, pick a constant $c$ such that $\phi_i+c>0$, $i=1,2$. Then let $X_i^t$ be the suspension flow over $f_i$ with the roof function $\phi_i+c$. Using the assumption on the periodic orbits we have that $\phi_1+c$ is cohomologous to $\phi_2\circ h+c$ by the Livshits Theorem~\cite{Liv}. This is equivalent to $X_1^t$ being conjugate to $X_2^t$ and, hence, Theorem~\ref{thm2} applies and yields the posited dichotomy.
\end{proof}

Comparing this result to the classical rigidity results for Anosov diffeomorphisms on $\T^2$ reviewed in Section~1.2, we see that matching of abstract data given by functions $\phi_1$ and $\phi_2$ works as well as matching of natural data given by stable and unstable Jacobians of Anosov diffeomorphisms.

We also note that the above corollary can be interpreted as follows. Consider partially hyperbolic skew-products $F_i(x,y)=(f_i(x), y+\phi_i(x))$ on $\mathbb T^3$. If the skew-products are conjugate and $F_1$ is not conjugate to a product diffeomorphism (or equivalently, $\phi_1$ is not cohomologous to a constant) then the conjugacy is smooth.

\subsection{Khalil-Lafont conjecture and weighted marked length spectrum rigidity}
We proceed to a question posed by Osama Khalil and Jean Lafont. Let $S$ be a surface of genus $\ge 2$. Given a Riemannian metric of negative curvature $g$ on $S$, in each non-trivial free homotopy class of maps $\gamma\colon S^1\to S$ there exists a unique unit speed geodesic $\gamma(g)$ whose length we will denote by $\ell(\gamma,g)$.

If $g_1$ and $g_2$ are two metrics of negative curvature such that for all free homotopy classes $\gamma$ we have $\ell(\gamma, g_1)=\ell(\gamma, g_2)$ then $g_1$ and $g_2$ are isometric. This result is known as {\it marked length spectrum rigidity} for surfaces and is due to Otal and Croke~\cite{otal, croke} (independently). Khalil and Lafont suggested a generalized ``weighted'' version of  marked length spectrum rigidity. Namely, instead of assuming matching of lengths one assumes that certain weight functions $\phi_1\colon T^1S\to\R$ and $\phi_2\colon T^1S\to\R$ match along corresponding geodesics:
\begin{equation}
\label{eq_weights}
\int_{\gamma(g_1)}\phi_1(\dot\gamma(g_1)(t))dt=\int_{\gamma(g_2)}\phi_2(\dot\gamma(g_2)(t))dt
\end{equation}
What can be said about the metrics?

In order to answer this question we first will need to establish a ``weighted" version of Theorem~\ref{thm2}. Recall the following definition. A function $\phi\colon M\to \R$ is called an {\it abelian coboundary} over an Anosov flow $X^t\colon M\to M$ generated by the vector field $X$ if 
$$\phi(x)=\omega(X(x))
$$
for some closed 1-form $\omega\colon M\to T^*M$.  

\begin{theorem}
\label{thm3}
 Let $X_i^t\colon M\to M$, $i=1,2$, be $C^r$, $r>2$, contact Anosov flows which are orbit equivalent via an orbit equivalence $H$ which is $C^r$ along the orbits. Assume that $\phi_i\colon M\to\R$, $i=1, 2$, are $C^r$, $r>2$, functions such that
 $$
 \int_\beta \phi_1(\beta(t))dt=\int_{H_*\beta}\phi_2(H_*\beta(t))dt
 $$
 for every periodic orbit $\beta$ of $X_1^t$ and corresponding periodic orbit $H_*\beta$ for $X_2^t$.
 Then either $\phi_i$ is an abelian coboundary over $X_i^t$, $i=1,2$, or $H$ is $C^{r_*}$, that is, $X_1^t$ is $C^{r_*}$-smoothly orbit equivalent to $X_2^t$. 
 \end{theorem}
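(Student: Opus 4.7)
The plan is to reduce Theorem~\ref{thm3} to Theorem~\ref{thm2} by using the weights $\phi_i$ to define time-reparameterizations of the flows $X_i^t$, thereby converting the weighted matching on periodic orbits into an ordinary period matching.  The orbit equivalence $H$ can then be adjusted along orbits to a genuine $C^0$ conjugacy between the time-changed flows, at which point Theorem~\ref{thm2} applies.

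More concretely, I would first seek $C^r$ positive functions $\rho_i\colon M\to\R_{>0}$ with $\rho_i-\phi_i=X_i u_i$ for some $u_i\in C^r(M)$, so that $\int_\beta \rho_i\,dt=\int_\beta \phi_i\,dt$ for every periodic orbit $\beta$.  By a Livshits-type positivity theorem, such $\rho_i$ exist whenever the periodic averages $\int_\beta \phi_i\,dt/p_i(\beta)$ are bounded below by a positive constant; failure of this condition should, after possibly replacing $\phi_i$ by $-\phi_i$, correspond to the abelian coboundary alternative.  With $\rho_i$ in hand, the time-changed flows $Y_i^t$ generated by $X_i/\rho_i$ are $C^r$ Anosov and, being time changes of the contact (hence volume preserving) flows $X_i^t$, preserve smooth volumes.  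The given matching together with $\int_\beta \rho_i\,dt=\int_\beta \phi_i\,dt$ yields $\int_\beta\rho_1\,dt=\int_{H_*\beta}\rho_2\,dt$, i.e., the $Y_i$-periods of corresponding periodic orbits agree under $H$.  By the classical along-orbit adjustment (cf.\ \cite[Theorem~19.2.9]{KH}), $H$ can then be modified to a $C^0$ conjugacy $\tilde H$ between $Y_1^t$ and $Y_2^t$; this adjustment is $C^r$ along orbits because $H$ is $C^r$ along orbits.

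Applying Theorem~\ref{thm2} to the pair $(Y_1^t,Y_2^t)$ yields a dichotomy.  Either (i) $\tilde H$ is $C^{r_*}$, in which case $H$ inherits $C^{r_*}$ regularity from $\tilde H$ by undoing the smooth-along-orbits time shift---the transverse regularity of $H$ is controlled by that of $\tilde H$, and combined with the $C^r$ regularity of $H$ along orbits it upgrades to joint $C^{r_*}$ regularity via Journ\'e's lemma.  Or (ii) both $Y_i^t$ are constant-roof suspensions of Anosov diffeomorphisms of $\T^2$, forcing $M$ to be a mapping torus of an Anosov automorphism of $\T^2$ (a solvmanifold).  This second alternative is incompatible with the contact hypothesis on $X_i^t$, since closed $3$-manifolds supporting contact Anosov flows are Seifert fibered with hyperbolic base or hyperbolic, not solvmanifolds.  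Alternatively and more constructively, in case (ii) the constancy of the $Y_i$-roof forces $\rho_i=T_i\bar\omega_i(X_i)+X_i v_i$ where $\bar\omega_i$ is the closed 1-form on $M$ dual to the suspension direction and $T_i$ is the constant roof length; absorbing the coboundary into an exact form and subtracting the coboundary relating $\rho_i$ to $\phi_i$, one obtains $\phi_i=(\omega_i-du_i-dv_i)(X_i)$, exhibiting $\phi_i$ as an abelian coboundary.

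The main obstacle is the positivity step for $\rho_i$ without breaking the matching.  Shifting $\phi_i$ by a constant---the naive route to positivity---would break the matching unless $p_1(\beta)=p_2(H_*\beta)$ for all closed orbits, which does not hold for a general orbit equivalence.  Shifting by an $X_i$-coboundary $X_iu_i$ preserves the matching but achieves positivity only when the periodic averages of $\phi_i$ are uniformly positive.  Identifying the exact failure of positivity with the abelian coboundary alternative---so that the two branches of the conclusion are genuinely exhaustive---is the most delicate point of the argument and the place where one must carefully combine Livshits-type theorems for flows with the full strength of the hypothesis that $H$ is $C^r$ along orbits.
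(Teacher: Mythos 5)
Your reduction works, essentially verbatim, only in the case the paper itself treats as easy (see the remark following Theorem~\ref{thm3}): when $\phi_1,\phi_2$ are positive, or cohomologous to positive functions, one reparametrizes by $Y_i=\frac{1}{\rho_i}X_i$, matches periods, upgrades $H$ to a conjugacy, and quotes Theorem~\ref{thm2}, with the suspension branch excluded by the contact hypothesis. The genuine gap is your claimed dichotomy that failure of uniform positivity of the periodic averages of $\phi_1$ (even after replacing $(\phi_1,\phi_2)$ by $(-\phi_1,-\phi_2)$) ``corresponds to the abelian coboundary alternative.'' This is false. An abelian coboundary does fail uniform positivity (it integrates to zero over null-homologous orbits, which exist since contact flows are homologically full), but the converse fails badly: there is an abundance of weights whose periodic averages change sign or accumulate at zero and which are \emph{not} of the form $\omega(X_1)+X_1u$ with $\omega$ closed, and for such weights Theorem~\ref{thm3} still asserts that the orbit equivalence must be $C^{r_*}$. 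In that regime your scheme cannot even start, since no positive representative $\rho_1$ exists in the cohomology class of $\phi_1$, and nothing in your argument produces either alternative. This sign-indefinite case is exactly what forces the paper's machinery: the generalized longitudinal Anosov cocycles $K_{\phi_i}$, the pentachotomy of Theorem~\ref{thm_tech2} obtained from periods of orbits shadowing homoclinic orbits, the Alternate/Positive Proportion Livshits theorems, and the structure theorem for contact reparametrizations of contact flows \cite[Theorem~7.1]{GRH2}, which yield the precise form $\phi_1=c_0+\omega(X_1)$. Only then, when $c_0\neq 0$, does the paper pass to the auxiliary matching pair $(c_0,\,\phi_2-\eta(X_2))$ with $[\eta]=H_*[\omega]$, prove that its second member is cohomologous to a positive function \emph{using the matching with the constant} $c_0$, and finally run the reparametrization-plus-Theorem~\ref{thm2} argument that you describe.

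Two secondary points. Your exclusion of the constant-roof-suspension branch via a topological classification of $3$-manifolds carrying contact Anosov flows is stronger than needed and not obviously correct as stated; the paper's Lemma~\ref{lemma_contact} does this softly: contact flows are homologically full, homological fullness is preserved under reparametrization, and suspensions are not homologically full. Likewise, your ``more constructive'' claim that a constant roof for $Y_i$ forces $\rho_i=T_i\bar\omega_i(X_i)+X_iv_i$ is unsubstantiated, and it is unnecessary once the suspension case is ruled out; similarly, in case (i) no Journ\'e-type upgrade of the original $H$ is needed, since the conjugacy between $Y_1^t$ and $Y_2^t$ is itself the desired $C^{r_*}$ orbit equivalence of $X_1^t$ and $X_2^t$.
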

 
 \begin{remark}
 It is well known that any $C^0$ orbit equivalence can always be adjusted along the orbits to be as smooth as the flows along the orbits.
 \end{remark}
\begin{remark} If we additionally assume that $\phi_1$ and $\phi_2$ are positive functions then Theorem~\ref{thm3} follows rather easily from Theorem~\ref{thm2}. Indeed, in this case one can consider reparametrizations $Y_i^t$ given by scaled generators $Y_i=\frac{1}{\phi_i}X_i$. The matching condition of Theorem~\ref{thm3} then translates into matching of periods of $Y_1^t$ and $Y_2^t$. Hence we have that $Y_1^t$ is conjugate to $Y_2^t$ and Theorem~\ref{thm3} can be applied. Also note that in this case the contact assumption is only needed to rule out the constant roof suspension case.
\end{remark}

It is easy to deduce a more general version for flows which merely admit contact reparametrizations, which we state next as a corollary. The authors also plan to generalize to Theorem~\ref{thm3} to the setting of volume preserving flows in the future work.

\begin{corollary}
\label{cor5}
 Let $Y_i^t\colon M\to M$, $i=1,2$, be $C^r$, $r>2$, Anosov flows which are orbit equivalent via an orbit equivalence $H$. Also assume that both $Y_i^t$ admit contact reparametrizations. Assume that $\phi_i\colon M\to\R$, $i=1, 2$, are $C^r$, $r>2$, functions such that
 $$
 \int_\beta \phi_1(\beta(t))dt=\int_{H_*\beta}\phi_2(H_*\beta(t))dt
 $$
 for every periodic orbit $\beta$ of $Y_1^t$ and corresponding periodic orbit $H_*\beta$ for $X_2^t$.
 Then either $\phi_i$ is an abelian coboundary over $Y_i^t$, $i=1,2$, or $Y_1^t$ is $C^{r_*}$-smoothly orbit equivalent to $Y_2^t$.
\end{corollary}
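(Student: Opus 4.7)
The plan is a direct reduction to Theorem~\ref{thm3} via the contact reparametrizations. Let $X_i^t$ be a contact reparametrization of $Y_i^t$, so that the generators are related by $X_i=\lambda_i Y_i$ for some positive $C^r$ function $\lambda_i\colon M\to \R_+$. Since a reparametrization preserves the orbit foliation, the given orbit equivalence $H$ between $Y_1^t$ and $Y_2^t$ is automatically an orbit equivalence between $X_1^t$ and $X_2^t$. By the remark following Theorem~\ref{thm3} I may further adjust $H$ along orbits (which does not affect whether or not it is an orbit equivalence, or the orbit equivalence class) to make it $C^r$ along orbits, so that the hypotheses of Theorem~\ref{thm3} for $X_1^t,X_2^t$ can be invoked.

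Next I translate the weighted matching hypothesis into the $X_i$-parametrization. Along any periodic orbit, the infinitesimal relation $X_i=\lambda_i Y_i$ gives $dt=\lambda_i\,ds$, and therefore, writing $\alpha$ for the $X_1$-parametrization of a periodic $Y_1$-orbit $\beta$,
$$
\int_\beta \phi_1(\beta(t))\,dt
=\int_\alpha (\phi_1\lambda_1)(\alpha(s))\,ds,
$$
and similarly for the flow $Y_2^t$. Thus, setting $\psi_i:=\phi_i\lambda_i$, the matching hypothesis of the corollary becomes exactly the matching hypothesis of Theorem~\ref{thm3} applied to the contact flows $X_1^t,X_2^t$ and the weight functions $\psi_1,\psi_2$.

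Now I apply Theorem~\ref{thm3} to the reparametrized data. The dichotomy provided is that either $H$ is $C^{r_*}$, in which case we are done since $H$ is also the orbit equivalence between $Y_1^t$ and $Y_2^t$, or else each $\psi_i$ is an abelian coboundary over $X_i^t$, meaning $\psi_i(x)=\omega_i(X_i(x))$ for some closed $1$-form $\omega_i$. In this second case I unpack using $X_i=\lambda_i Y_i$:
$$
\phi_i(x)\lambda_i(x)=\psi_i(x)=\omega_i(\lambda_i(x) Y_i(x))=\lambda_i(x)\,\omega_i(Y_i(x)).
$$
Dividing by the positive function $\lambda_i$ yields $\phi_i(x)=\omega_i(Y_i(x))$, which is exactly the definition of $\phi_i$ being an abelian coboundary over $Y_i^t$.

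I do not expect any serious obstacle: the entire depth of the statement is provided by Theorem~\ref{thm3}, while both the change-of-variables in the integral and the unpacking of the abelian coboundary condition are formalities exploiting the simple algebraic relation $X_i=\lambda_i Y_i$. The only point worth verifying carefully is that the reparametrizing factor $\lambda_i$ is $C^r$ (so that $\psi_i=\phi_i\lambda_i$ is genuinely a $C^r$ weight as required by Theorem~\ref{thm3}), which follows from the assumed $C^r$ regularity of the contact reparametrization.
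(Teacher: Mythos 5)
Your proposal is correct and follows essentially the same route as the paper: reduce to Theorem~\ref{thm3} applied to the contact reparametrizations, transfer the matching hypothesis by the change-of-variables along orbits, and unpack the abelian coboundary condition using $X_i=\lambda_i Y_i$. The only difference is purely notational (the paper writes the reparametrizing factor on the other side, so its weight appears as $\phi_i/a_i$ rather than your $\phi_i\lambda_i$), and your version, including the check that the new weight is $C^r$ and that $H$ can be adjusted to be $C^r$ along orbits, is sound.
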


Theorem~\ref{thm3} yields a solution of the Khalil-Lafont conjecture.

\begin{corollary} 
\label{cor3}
Let $S$ be a closed surface and let $g_1$ and $g_2$ be metrics of negative curvature on $S$. Let $\phi_1\colon T^1S\to\R$ and $\phi_2\colon T^1S\to\R$ be $C^r$, $r>2$, functions satisfying the matching condition~\eqref{eq_weights}. Also assume that $\phi_1$ is not an abelian coboundary over the geodesic flow of $g_1$. Then $g_1$ is homothetic to $g_2$, that is, there exists a positive constant $c$ such that $g_2$ is isometric to $c^2g_1$.
\end{corollary}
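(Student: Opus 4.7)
The plan is to apply Theorem~\ref{thm3} to the geodesic flows of $g_1$ and $g_2$, and then invoke classical geodesic-flow rigidity to pass from smooth orbit equivalence to homothety.

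Let $X_i^t\colon T^1S\to T^1S$ denote the geodesic flow of $g_i$, $i=1,2$, and let $\alpha_i$ be its canonical contact 1-form (so $\alpha_i(X_i)=1$ and $\iota_{X_i}d\alpha_i=0$). Each $X_i^t$ is a $C^r$ contact Anosov flow. There is a canonical orbit equivalence $H\colon T^1S\to T^1S$ sending the closed $X_1^t$-orbit in each free homotopy class $\gamma$ to the corresponding closed $X_2^t$-orbit; such an $H$ can be produced by structural stability along a smooth path of negatively curved metrics from $g_1$ to $g_2$, or via the canonical identification of boundaries at infinity on the universal cover. By the remark after Theorem~\ref{thm3}, I may take $H$ to be $C^r$ along orbits. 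Since the closed $X_i^t$-orbit in class $\gamma$ is precisely $\dot\gamma(g_i)$, condition~\eqref{eq_weights} is the hypothesis of Theorem~\ref{thm3}; as $\phi_1$ is not an abelian coboundary over $X_1^t$, the first alternative of Theorem~\ref{thm3} is excluded and $H$ is in fact $C^{r_*}$.

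It remains to deduce homothety of $g_1, g_2$ from the existence of a $C^{r_*}$ orbit equivalence between the two geodesic flows. Writing $H_*X_1 = a X_2$ with $a$ a positive $C^{r_*-1}$ function on $T^1S$, the period of the closed $X_2^t$-orbit $H_*\beta$ in class $\gamma$ equals $\int_\beta a\,dt = \ell(\gamma,g_2)$, while $\beta$ itself has period $\ell(\gamma,g_1)$. If I can show that $a$ is $X_1^t$-cohomologous to a positive constant $c$, then by the Livshits theorem I obtain the proportionality $\ell(\gamma,g_2) = c\,\ell(\gamma,g_1)$ for every free homotopy class $\gamma$, and the Otal--Croke marked length spectrum rigidity theorem applied to $g_1$ and $c^{-2}g_2$ (which then share the same marked length spectrum) gives an isometry $g_1\cong c^{-2}g_2$, that is, $g_2\cong c^2 g_1$, as required.

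The main obstacle is precisely this cohomology-to-constant step: showing that the smooth time-change of a smooth orbit equivalence between two contact Anosov flows on $T^1S$ must be a coboundary plus a constant. Using smoothness of $H$ one sees that $\eta := H^*\alpha_2$ is itself a smooth contact 1-form on $T^1S$ with $\eta(X_1) = a$ and $\iota_{X_1}d\eta = 0$, so that $\eta - a\alpha_1$ lies in $\ker\alpha_1$. Analyzing this 1-form via the symplectic form $d\alpha_1|_{\ker\alpha_1}$ together with the $X_1^t$-invariance of $\alpha_1$ should force $a = c + X_1 u$ for some positive constant $c$ and smooth function $u$ on $T^1S$, completing the argument.
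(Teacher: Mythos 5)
The first half of your argument (producing the canonical orbit equivalence $H$ homotopic to $\mathrm{id}_{T^1S}$ and applying Theorem~\ref{thm3} to exclude the abelian-coboundary alternative, so that $H$ is $C^{r_*}$) is exactly the paper's route. The gap is in your final ``cohomology-to-constant'' step, which you leave as a sketch (``should force'') and which is false in the generality in which you state it: the smooth time change of a smooth orbit equivalence between two contact Anosov flows need \emph{not} be cohomologous to a constant. The correct general statement — this is \cite[Theorem~7.1]{GRH2}, quoted in the paper — is that the time change is, up to a coboundary, of the form $c+\omega(X_1)$ with $\omega$ a \emph{closed} $1$-form, and the closed-form term is a genuine obstruction. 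Concretely, for any closed non-exact $1$-form $\omega$ with $1+\omega(X_1)>0$, the flow generated by $\frac{1}{1+\omega(X_1)}X_1$ is again a contact Anosov flow (with contact form $\alpha_1+\omega$, since $d(\alpha_1+\omega)=d\alpha_1$ and $(\alpha_1+\omega)$ evaluates to $1$ on the new generator), and the identity map is a smooth orbit equivalence whose time change $1+\omega(X_1)$ has periodic-orbit integrals $\mathrm{per}(\gamma)+\langle[\omega],[\gamma]\rangle$, not proportional to the periods. Your proposed symplectic analysis also has a technical wrinkle: $\ker H^*\alpha_2=DH^{-1}(E^s_2\oplus E^u_2)$ need not coincide with $\ker\alpha_1=E^s_1\oplus E^u_1$, because an orbit equivalence matches weak, not strong, distributions.

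So to finish along your lines you must rule out the closed $1$-form term, and that requires input beyond contact geometry: the special structure of geodesic flows and the fact that $H$ is homotopic to the identity. This is precisely the paper's ``alternative'' argument: by \cite[Theorem~7.1]{GRH2} the time change is $c+\omega(X_1)$ up to coboundary; since geodesic flows have zero Sharp's minimizers and $H_*(0)=0$, one gets $\omega=0$; then the marked length spectra are proportional and the Otal--Croke theorem gives the homothety — exactly the conclusion you were aiming for. The paper's primary argument sidesteps the issue entirely: smoothness of $H$ makes the pulled-back Liouville current $H^*m_2$ absolutely continuous, hence by ergodicity proportional to $m_1$, and Otal's proof then yields the isometry after rescaling $g_1$. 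Either of these would complete your proof; as written, the decisive step is missing and its stated general form is not true.
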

Using recent progress on marked length spectrum rigidity~\cite{GLP} the assumption on $g_1$ and $g_2$ can be relaxed to merely having Anosov geodesic flows.

In the case when the functions $\phi_i$ depends only on the base-point of the tangent vector then the Corollary~\ref{cor3} takes the following particularly nice form, since in this case the only abelian coboundary is the zero function. We also obtain matching of the weights in this case.

\begin{corollary} 
\label{cor4}
Let $S$ be a closed surface and let $g_1$ and $g_2$ be metrics of negative curvature on $S$. Let $\psi_1\colon S\to\R$ and $\psi_2\colon S\to\R$ be  non-zero functions satisfying the matching condition
\begin{equation*}
\label{eq_weights}
\int_{\gamma(g_1)}\psi_1(\gamma(g_1)(t))dt=\int_{\gamma(g_2)}\psi_2(\gamma(g_2)(t))dt
\end{equation*}
for all $\gamma$.
 Then there exists a constant $c>0$ and an isometry $f\colon(S,c^2g_1)\to (S, g_2)$. We also have matching of the weights $\psi_2\circ f=c\psi_1$.
\end{corollary}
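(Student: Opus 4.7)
The plan is to reduce Corollary~\ref{cor4} to Corollary~\ref{cor3} by lifting the weights via the footpoint projection $\pi\colon T^1S\to S$ and setting $\phi_i := \psi_i\circ\pi$ for $i=1,2$; the matching of line integrals along closed geodesics in $S$ then translates verbatim to the matching of periodic orbit integrals required by Corollary~\ref{cor3}. The task reduces to verifying that $\phi_1$ is not an abelian coboundary over the $g_1$-geodesic flow, after which Corollary~\ref{cor3} immediately delivers a homothety $f\colon(S,c^2g_1)\to(S,g_2)$; the weight matching $\psi_2\circ f = c\psi_1$ (up to inverting $c$ depending on normalization conventions) then follows from one further application of the same injectivity argument that rules out the abelian coboundary alternative.

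The crux is therefore to establish the following: \emph{if $\psi\circ\pi$ is an abelian coboundary over the geodesic flow of a closed negatively curved surface $(S,g)$, then $\psi\equiv 0$.} I would argue by contradiction. Assuming $\psi\circ\pi = \omega(X)$ for some closed 1-form $\omega$ on $T^1S$, integrating along any closed unit-speed $g$-geodesic $\gamma$ of length $\ell$ (with lift $\tilde\gamma$ to $T^1S$) yields
\[
\int_0^\ell\psi(\gamma(t))\,dt = \int_{\tilde\gamma}\omega.
\]
Since $\chi(S)\ne 0$ on a surface of genus $\ge 2$, the Gysin sequence of $\pi\colon T^1S\to S$ collapses to an isomorphism $\pi^*\colon H^1(S;\R)\xrightarrow{\sim} H^1(T^1S;\R)$, so $[\omega] = \pi^*[\alpha]$ for some closed 1-form $\alpha$ on $S$, whence $\int_{\tilde\gamma}\omega = \langle[\alpha],[\gamma]\rangle$. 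Comparing $\gamma$ to its orientation reverse---which preserves the $\psi$-integral on the left but negates $[\gamma]$ on the right---forces both sides to vanish, so the X-ray transform of $\psi$ on closed $g$-geodesics is identically zero; the classical Guillemin--Kazhdan injectivity of the X-ray transform on $0$-tensors over a closed negatively curved surface then forces $\psi\equiv 0$, contradicting the hypothesis on $\psi_1$.

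With the abelian coboundary alternative excluded, Corollary~\ref{cor3} supplies the isometry $f$. For the weight matching, I would observe that every unit $g_1$-speed closed geodesic $\gamma_1$ of length $\ell_1$ corresponds under $f$ (after rescaling time by $c$) to the unit $g_2$-speed closed geodesic $\gamma_2(s) = f(\gamma_1(s/c))$ of length $c\ell_1$; substituting $s = ct$ in the matching hypothesis yields
\[
\int_0^{\ell_1}\psi_1(\gamma_1(t))\,dt = c\int_0^{\ell_1}(\psi_2\circ f)(\gamma_1(t))\,dt
\]
for every closed $g_1$-geodesic. A second invocation of the X-ray injectivity, applied to the smooth function $\psi_1 - c(\psi_2\circ f)$ on $S$, then completes the proof. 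The sole non-formal ingredient---and the step I expect to be the main obstacle---is the X-ray transform injectivity on $0$-tensors, which depends essentially on the negative curvature of $g$ via the Pestov identity (Guillemin--Kazhdan).
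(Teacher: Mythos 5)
Your proposal is correct and follows essentially the same route as the paper: reduce to Corollary~\ref{cor3} with $\phi_i=\psi_i\circ\pi$, rule out the abelian coboundary alternative (your Gysin/orientation-reversal argument supplies a proof of the claim, asserted without proof in the paper, that the only basic abelian coboundary is zero), and obtain the weight matching by a change of variables along closed geodesics followed by Livshits plus Guillemin--Kazhdan/Croke--Sharafutdinov injectivity, exactly as in the paper. Two minor points: the change-of-variables step requires the isometry $f$ to be homotopic to $\mathrm{id}_S$ so that $f(\gamma(g_1))$ represents the same free homotopy class (this is part of the Otal--Croke conclusion, which the paper invokes explicitly and you should too), and your computation correctly yields $\psi_2\circ f=\tfrac1c\,\psi_1$, so the constant in the paper's stated conclusion is the reciprocal of the homothety constant, consistent with your hedge about normalization.
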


Of course, it is very interesting and challenging to generalize these corollaries to higher dimensional setting (at least to in the cases when marked length spectrum rigidity is known). We also would like to ask if corollaries admit a generalization to the setting on non-positively curved metrics or metrics with no conjugate points. More specifically, does a  weighted version Croke-Fathi-Feldman~\cite{CFF} marked length spectrum rigidity holds? (The matching condition is imposed on the infimum of integrals over all geodesic minimizers in a given free homotopy class.) Note that in this setting the dynamical tools become much less powerful.



\subsection{Remarks}

\begin{enumerate}
\item We would like to point out that the proof of our main result crucially relies on the earlier rigidity theorems, in particular, on de la Llave-Marco-Moriy\'on and Pollicott theorem and on Feldman-Ornstein theorem. Ultimately, our proof splits into several cases. In one case we conclude that both flows are contact and we finish by citing Feldman-Ornstein theorem~\cite{FO}. In the other case we are able to recover stable and unstable eigenvalues at all periodic points from the periods of certain approximating periodic orbits. This then enables us to apply de la Llave-Marco-Moriy\'on and Pollicott theorem.
Hence our proof builds upon earlier works on rigidity and in no way discards it.
\item The proof of the main result also relies on work of Foulon and Hasselblatt~\cite{FH} on longitudinal Anosov cocycle and on very recent work of Dilsavor and Marshall Reber~\cite{DMR} on positive proportion Livshits theorem.
\item The conclusion of Theorem~\ref{thm3} (and Corollary~\ref{cor3}) is sharp in the following sense. If $\phi_1$ is an abelian coboundary then, generally speaking, one does not have a smooth orbit equivalence of the flows. Indeed, start with orbit equivalent flows $X_1^t$ and $X_2^t$ via $H$, which are not $C^1$ orbit equivalent (this is always the case when at least one pair of multipliers at corresponding periodic orbits are different). Take any closed 1-form $\omega_1$ and let $\phi_1=\omega_1(X_1)$. Let $\omega_2$ be any closed 1-form which represents the cohomology class $H_*[\omega_1]$. Let $\phi_2=\omega_2(X_2)$. Then for any peridic orbit $\beta$ of $X_1$ we have
\begin{multline*}
\,\,\,\,\,\,\,\,\,\,\,\,\int_\beta\phi_1(X_1\beta(t))dt=\langle[\omega_1],[\beta]\rangle=
\langle[H_*\omega_1],[H_*\beta]\rangle\\= \langle[\omega_2],[H_*\beta]\rangle=\int_{H_*\beta}\phi_2(X_2(H_*\beta(t))dt,
\end{multline*}
where we have used the fact that homology-cohomology pairing is independent of the choice of representatives and also its functoriality property. Hence, we have a matching pair $(\phi_1,\phi_2)$ of abelian coboundaries without having a smooth orbit equivalence.
\item Recently the current authors have written a series of papers on rigidity in rank one dynamics for expanding maps, Anosov diffeomorphisms and Anosov flows under various additional assumptions~\cite{GRH, GRH21a, GRH21b, GRH0}. All these papers utilize what we call ``matching functions technique.'' The matching functions technique seems to be quite hopeless in the setting of 3-dimensional Anosov flows. So, while the statements of results in this series of papers are very similar to our main result here (rigidity), technologically this paper is very different from our previous papers on the subject of rigidity in rank one hyperbolic dynamics.
\end{enumerate}

{ \bfseries Acknowledgements.} We would like to thank Jean Lafont and Osama Khalil for sharing their question on weighted marked length spectrum rigidity with us. It had largely motivated our interest in improving rigidity results for 3-dimensional Anosov flows. We are very grateful to Livio Flaminio for his interest in this work and enlightening discussions. We thank Martin Leguil for discussions and, especially, for pointing us to the formulas which connect periods of closed orbits and Lyapunov exponents. We also thank James Marshall Reber for checking various parts of the proof and his feedback on our drafts. Last, but not the least, we would like to thank Caleb Dilsavor and James Marshall Reber for their recent proof of Positive proportion Livshits Theorem which we needed for this paper.
\section{Preliminaries}

\subsection{De la Llave-Marco-Moriy\'on and Pollicott theorem for 3-dimensional Anosov flows}
We briefly recall the scheme of the proof of the rigidity theorem stated in Section~1.2. We make an additional assumption that the conjugate flows are both volume preserving which makes the argument more succinct. We follow~\cite{dlL}.  

The first step is to use thermodynamic formalism and the Livshits theorem to show that the conjugacy $H$ sends the invariant volume of $X_1^t$ to the invariant volume of $X_2^t$. Then one concludes that the conjugacy sends 1-dimensional conditional measures of the volume on local stable and unstable leaves of $X_1^t$ to corresponding conditional measures on local stable and unstable leaves of $X_2^t$. One can argue that these measures are smooth which immediately yields smoothness of $H$ along the stable and unstable leaves. The last step in the proof is to apply Journ\'e's regularity lemma~\cite{J} which establishes smoothness of $H$ from the smoothness along the foliations.

\subsection{Moser normal form}
We recall the classical Moser normal form for a conservative hyperbolic fixed point in dimension 2~\cite{M}. Assume that $F$ is a smooth area preserving, orientation preserving local map defined on a neighborhood of the origin and such that the origin is a hyperbolic saddle point. Then there exists a smooth area preserving change of coordinates $\Psi$ such that $F$ takes the following form
$$
\Psi^{-1}\circ F\circ \Psi(x,y)=(\mu x(1+axy+O((xy)^2)), \mu^{-1}y(1-axy+O((xy)^2)),
$$
where $\mu\in(0,1)$.
For the above formula to hold $F$ has to be at least $C^5$ or better. 

Since we will be working in $C^r$ regularity with $r>2$ only, we will need to have a weak version of the Moser normal form which holds in such low regularity. Hence, let $F$ be as before, but now only assumed to be $C^r$ regular. Let $\theta=\min\{1,r-2\}>0$. Then there exists a $C^r$ change of coordinates $\Psi$ such that
\begin{equation}
\label{eq_moser}
\Psi^{-1}\circ F\circ \Psi(x,y)=(\mu x+xy\phi_1(x,y), \mu^{-1}y+xy\phi_2(x,y) ),
\end{equation}
where $\phi_1$ and $\phi_2$ are $C^\theta$, that is, they are H\"older with exponent $\theta$ and they vanish at the origin, $\phi_1(0,0)=\phi_2(0,0)=0$. 

Existence of such a normal form is an exercise and can be established in four steps which we proceed to outline. First, by a linear change of coordinates, $F$ can be brought to the form
$$
(x,y)\mapsto(\mu x+Q_1(x,y), \mu^{-1}y+Q_2(x,y))
$$
where $Q_1$ and $Q_2$ vanish to the first order at the origin. Then using a  change of coordinates of the form $(x,y)\mapsto (x+P_1(x,y), y+P_2(x,y))$ with $P_1$ and $P_2$ being homogenous degree 2 polynomials we can ensure that $Q_1$ and $Q_2$ vanish to the second order at the origin (that is, all second order partial derivatives vanish at $(0,0)$). The third step is to ``straighten'' the stable and unstable manifolds of the saddle. We can push the stable manifold to the $x$-axis along the vertical direction and then push the unstable manifold to the $y$-axis along the horizontal direction. Notice that this change of coordinates is $C^r$ since the stable and unstable manifolds are $C^r$. In this way we bring $F$ to the form
$$
(x,y)\mapsto(\mu x+\hat Q_1(x,y), \mu^{-1}y+\hat Q_2(x,y))
$$
Since $x$ axis is now invariant, we have that $\hat Q_2=y\bar Q_2$ and, since, $y$ axis is invariant, $\hat Q_1=x\bar Q_1$.
Also notice that, since the stable and unstable manifolds were tangent to the axes to the second order, after the last coordinate change the functions $\hat Q_1$ and $\hat Q_2$ still vanish to the second order at $(0,0)$. 

Now the restriction of $F$ to the $x$-axis has the form $x\mapsto \mu x+x\bar Q_1(x,0)$. Using Poincar\'e linearization we can $C^r$ conjugate it to the linear map $x\mapsto \mu x$. It is easy to $C^r$ extend this conjugacy to the neighborhood without destroying any of the established properties. The restriction to the $y$-axis can be linearized in the same way. After this last change of coordinates the map takes the form 
$$
(x,y)\mapsto(\mu x+x\tilde Q_1(x,y), \mu^{-1}y+y \tilde Q_2(x,y))
$$
where $\tilde Q_1$ and $\tilde Q_2$ are $C^{r-1}$ and $\tilde Q_1(x,0)=\tilde Q_2(0,y)=0$ since we have linearized along the axis. Hence, we have $\tilde Q_1=y\phi_1$ and $\tilde Q_2=x\phi_2$, which yields the posited normal form~\eqref{eq_moser}. Note that it is clear that $\phi_1$ and $\phi_2$ are $C^\theta$ and they vanish at the origin since the non-linear component vanishes to the second order at $(0,0)$.

\subsection{Adapted transverse coordinates}
\label{sec23}

Now we recall the definition of adapted transverse coordinates for a $3$-dimensional volume preserving Anosov flow $X^t\colon M\to M$.
These coordinates constitute a non-stationary version of Moser normal form and were introduced by Hurder and Katok~\cite{HK}.

Assume that   $X^t\colon M\to M$ is a $C^{r}$, $r> 2$, 3-dimensional Anosov flow which preserves a volume form $\omega$. A map 
$$
\Psi\colon M\times (-\eps,\eps)^2\to M;\,\, (p,x,y)\mapsto \Psi_p(x,y)
$$
is called a {\it $C^{r}$ adapted transverse coordinate system} for $X^t$ if the following properties hold.
\begin{enumerate}
\item $\Psi_p(0,0)=p$ and the map $\Psi_p\colon (-\eps,\eps)^2\to M$ is a $C^{r}$ embedding whose image $\cT_p$ is transverse to the flow generator $X$;
\item The family of maps $\Psi\colon M\to Emb((-\eps,\eps)^2, M)$ is H\"older continuous map into the space of embeddings equipped with $C^{r}$ topology;\footnote{In fact, one can also require that $\Psi$ is $C^1$ in the first coordiate $p$ as Hurder-Katok do, but this will not be important.}
\item The ``horizontal'' vector field $(\Psi_p)_*(\frac{\partial}{\partial x})$ and ``vertical''  vector field $(\Psi_p)_*(\frac{\partial}{\partial y})$ on $\cT_p$ are transverse to the weak stable and weak unstable distributions, respectively. Further, $(\Psi_p)_*(\frac{\partial}{\partial x})$ is $C^1$ tangent to the vector field $E^{0u}\cap T\cT_p$ at $(\Psi_p)_*(\frac{\partial}{\partial x})(0,0)=E^{0u}(p)\cap T_p\cT_p$; and $(\Psi_p)_*(\frac{\partial}{\partial y})$ is $C^1$ tangent to the vector field $E^{0s}\cap T\cT_p$ at $(\Psi_p)_*(\frac{\partial}{\partial y})(0,0)=E^{0s}(p)\cap T_p\cT_p$;
\item The curve $\Psi_p((\eps,\eps), \{0\})$ is contained in the weak stable submanifold of $p$ and the curve $\Psi_p((\{0\}, (\eps,\eps))$ is contained in the weak unstable submanifold of $p$;
\end{enumerate}

Hurder and Katok proved that $C^{r-1}$ adapted transverse coordinates exist for $C^{r}$ flows and also proved that they provide a normal form for the flow~\cite[Section~4]{HK}. The reason why they lose a derivative is that they insist on an additional volume preservation property which we don't need in this paper. Namely, they require that 
$$
\Psi^*_p(\omega_{\cT_p})=dx\wedge dy, 
$$
where $\omega_{\cT_p}=\iota_X\omega$ the induced volume on $\cT_p$. To guarantee this property an additional coordinate change must made which is responsible for the loss of a derivative. Since we don't need such a property, our adapted charts are $C^r$ and provide the following normal form for the Poincar\'e return maps $F_p\colon\cT_p\to\cT_{X^{t_0}(p)}$, $t_0>0$,
\begin{equation}
\label{eq_psi}
\Psi^{-1}_{X^{t_0}(p)}\circ F_p\circ \Psi_p(x,y)=(\mu(t_0) x+o(x^2+y^2), \mu(t_0)^{-1}y+o(x^2+y^2) ),\,\,\, \mu\in(0,1)
\end{equation}
%

\begin{remark} Note that an adapted coordinate systems for $X^t$ is also an adapted coordinate system for any reparametrization of $X^t$.
\end{remark}

\subsection{Longitudinal Anosov cocycle and Foulon-Hasselblatt theorem}
\label{sec_cocycle}

Given a point $p\in M$ and a time $t\in \R$ consider the first return time $\xi$ from $\cT_p$ to $\cT_{X^t(p)}$, which is a $C^r$ function defined by two conditions
$$
\xi(p)=t,\,\,\, X^{\xi(q)}(q)\in \cT_{X^t(p)}, q\in \cT_p
$$
These conditions uniquely define $\xi$ in a neighborhood of $p$ in $\cT_p$. Recall that $\cT_p$ is equipped with adapted coordinates $(x,y)$, hence, we can view $\xi$ as a function of variables $x$ and $y$. Define {\it longitudinal Anosov cocycle} $K\colon M\times \R\to \R$ as the mixed partial derivative
\begin{equation}
\label{eq_AC}
K(p,t)=\frac{\partial^2\xi}{\partial x\partial y}(0,0)
\end{equation}
Using linearity of the partial derivative and that $\det DF_p(p)=1$ by~\eqref{eq_psi}, it is easy to check that $K$ is an additive cocycle over $X^t$. Since $\Psi_p$ is $C^r$ with $r>2$, it is immediate from property~2 of the adapted coordinates that $K$ is H\"older continuous. {\it Foulon and Hasselblatt~\cite{FH} (see also~\cite{FFH}) proved that if $K$ is a coboundary then then $E^s\oplus E^u$ is at least $C^1$, which then implies by work of Plante~\cite[Theorem~4.7]{Pl} that either $X^t$ is a contact flow or $E^s\oplus E^u$ is an integrable distribution which, in turn, implies that $X^t$ is a constant roof suspension.}

We have to remark that our definition of longitudinal Anosov cocycle is not exactly the same as the one given by Foulon-Hasselblatt. This is because they define the cocycle relative to a different collection of adapted transverse coordinates. Namely, they use transversals which contain local stable and unstable manifolds through $p$. This has an advantage that the point $(0,0)$ is a critical point of the first return time function and then the cocycle can be defined as the value of Hessian on stable and unstable unit vectors. The disadvantage is that it is harder to see that the cocycle is H\"older. However, we will check that it makes little difference. Namely, the next lemma shows that the value of the mixed partial derivative at a periodic point does not depend on a particular choice of transversal. By the Livshits theorem~\cite{Liv} the cohomology class of a H\"older cocycle is determined by its values on periodic points. Hence, by the following lemma, cocycle $K$ is cohomologous to the one defined in~\cite{FH} and, hence, Foulon-Hasselblatt result indeed applies to the cocycle $K$ defined above.
\begin{lemma}
\label{lemma_periodic}
Let $p=X^T(p)$ be a periodic point and let $\cT_p$ be the transversal with adapted coordinates $(x,y)$. Let $\cT_p'$ be another transversal through $p$. Since $\cT_p$ and $\cT_p'$ are related by a short holonomy along the flow the adapted coordinates also induce coordinates on $\cT_p'$. Denote by $\xi$ the return time to $\cT_p$ and by $\xi'$ the return time to $\cT_p'$. Then
$$
\frac{\partial^2\xi}{\partial x\partial y}(0,0)=\frac{\partial^2\xi'}{\partial x\partial y}(0,0)
$$
\end{lemma}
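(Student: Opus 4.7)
The plan is to derive $\xi'$ from $\xi$ via the short flow holonomy identifying $\cT_p$ and $\cT_p'$, and then show that the discrepancy has no mixed second derivative at the origin, using the normal form~\eqref{eq_psi} for the Poincar\'e return map at $p$.

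First I would introduce the local holonomy $H\colon\cT_p\to\cT_p'$, defined by $H(q)=X^{\tau(q)}(q)$ for a small $C^r$ time function $\tau$ with $\tau(p)=0$; by hypothesis the coordinates used on $\cT_p'$ are exactly those pushed forward from $\cT_p$ by $H$. For $q$ near $p$ the holonomy $H$ conjugates the first-return map $F_p\colon\cT_p\to\cT_p$ to the first-return map $F_p'\colon\cT_p'\to\cT_p'$, and tracking the total flow time along the orbit of $q$ on both sides of $H\circ F_p=F_p'\circ H$ gives
$$
\tau(q)+\xi'\bigl(H(q)\bigr)=\xi(q)+\tau\bigl(F_p(q)\bigr).
$$
Read in the induced coordinates, this says $\xi'=\xi+\tau\circ F_p-\tau$, so the lemma collapses to the claim $\partial_x\partial_y(\tau\circ F_p-\tau)(0,0)=0$.

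The last step is a direct chain-rule computation. By~\eqref{eq_psi} one has $DF_p(0,0)=\mathrm{diag}(\mu,\mu^{-1})$ and all second partial derivatives of $F_p$ vanish at the origin (since the nonlinear remainder is $o(x^2+y^2)$ and $F_p$ is $C^2$). Writing $F_p=(u,v)$ and expanding $\partial_x\partial_y(\tau\circ F_p)$ at $(0,0)$, every term produced by the chain rule vanishes except the one proportional to $\tau_{uv}(0,0)$, whose coefficient is $u_x(0,0)\cdot v_y(0,0)=\mu\cdot\mu^{-1}=1$. This gives $\partial_x\partial_y(\tau\circ F_p)(0,0)=\partial_x\partial_y\tau(0,0)$, which is the required cancellation.

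The part I find most delicate is setting up the flow-time identity correctly: the crucial point is that the Poincar\'e return map $F_p$, rather than merely the holonomy $H$, appears on the right-hand side, since it is the hyperbolic scaling of $F_p$ that drives the cancellation. Once that identity is in hand the rest is essentially forced, and the cancellation is specific to the mixed partial --- for the pure second derivatives one would pick up a factor of $\mu^2$ or $\mu^{-2}$ and no cancellation would occur, which is consistent with the fact that only the mixed partial is used in~\eqref{eq_AC} to define the longitudinal Anosov cocycle.
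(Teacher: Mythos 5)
Your proof is correct and follows essentially the same route as the paper: you derive the identity $\xi'=\xi+\tau\circ F_p-\tau$ from the flow-time bookkeeping (the paper's transition function $u$ is your $\tau$), and then kill the mixed partial of the coboundary term using $DF_p(0,0)=\mathrm{diag}(\mu,\mu^{-1})$ together with the vanishing of the second-order terms in the normal form, exactly as in the paper's chain-rule computation. No gaps.
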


\begin{proof}
For any point $q\in\cT_p$ we have $X^{u(q)}(q)\in\cT_p'$, where $u$ is a smooth function with $u(p)=0$. Then
$$
\xi'=\xi-u+u\circ F_p,
$$
where $F_p$ is the first return map to $\cT_p$. Recall that, when written in $(x,y)$-coordinates $F_p$ has a normal form with all second order terms vanishing~\eqref{eq_moser}. Hence taking the mixed partial derivative we have
\begin{multline*}
\frac{\partial^2\xi'}{\partial x\partial y}(0,0)=\frac{\partial^2\xi}{\partial x\partial y}(0,0)-\frac{\partial^2u}{\partial x\partial y}(0,0)+\frac{\partial}{\partial x}\left(\frac{\partial u}{\partial y}\circ F_p\,DF_p\left(\frac{\partial}{\partial y}\right)\right)(0,0)\\
= \frac{\partial^2\xi}{\partial x\partial y}(0,0)-\frac{\partial^2u}{\partial x\partial y}(0,0)+\frac{\partial^2u}{\partial x\partial y}(0,0)\mu\mu^{-1}+\frac{\partial u}{\partial y}(0,0)\,D^2F_p\left(\frac{\partial}{\partial x}, \frac{\partial}{\partial y}\right)(0,0)\\
=\frac{\partial^2\xi}{\partial x\partial y}(0,0)
\end{multline*}
\end{proof}

\subsection{Positive Proportion and Alternate Livshits Theorems}

We will need to use the following ``positive proportion version'' of the celebrated Livshits Theorem~\cite{Liv} which was recently established by Dilsavor and Marshall Reber~\cite{DMR}.

Let $X^t\colon M\to M$ be a transitive Anosov flow and let $\Delta$ be a fixed positive number. Let $\cP_T$ be the set of periodic orbits whose periods lie in the interval $(T, T+\Delta]$. Let $a\colon M\times\R\to\R$ be a H\"older continuous cocyle and $\cP_{T,a}\subset\cP_T$ be the subset of periodic orbits on which $a$ vanishes. The basic version of positive proportion Livshits theorem says that if $\limsup_{T\to\infty} \#\cP_{T,a}/\#\cP_T>0$ then $a$ is a coboundary. We will need a slightly more general version, where positive proportion is measured relative to an equilibrium state.

So let $B\colon M\times\R\to\R$ be another H\"older continuous cocycle and let $\mu_B$ be the associated equilibrium state~\cite{B}. Given a periodic orbit $\gamma$ we will write $B(\gamma)$ for the value $B(p,|\gamma|)$, where $p\in\gamma$ and $|\gamma|$ is the period of $p$. Also denote by $\delta_\gamma$ the invariant measure supported on $\gamma$ of total mass $|\gamma|$. Then the measures
$$
\mu_{T,B}=\frac{1}{\sum_{\gamma\in\cP_T}|\gamma|e^{B(\gamma)}} \sum_{\gamma\in\cP_T} e^{B(\gamma)}\delta_{\gamma}
$$
approximate $\mu_B$~\cite{bowen, F, P}. Formally, the set $\cP_T$ and, accordingly, the measures $\mu_{T,B}$ also depend on $\Delta$, but $\Delta$ will be fixed throughout the discussion, say one can take $\Delta=1$, and hence, it is safe to omit this dependence in notation to keep notation less cumbersome.

\begin{theorem}[Positive Proportion Livshits Theorem~\cite{DMR}]
\label{thm_ppl}
Let $X^t$, $B$ and $a$ be as described above. Assume that $\cP_{T,a}$ has positive proportion relative to $\mu_B$, that is,
$$
\limsup_{T\to\infty}\mu_{T,B}(\cP_{T,a})>0
$$
Then cocycle $a$ is a coboundary, that is,
$$
a(x,t)=u(X^t(x))-u(x)
$$
for some H\"older continuous function $u$.
\end{theorem}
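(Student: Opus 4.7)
The plan is to prove the contrapositive: assuming $a$ is not a coboundary, I would show that $\mu_{T,B}(\cP_{T,a}) \to 0$ as $T \to \infty$.

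First, the easy sub-case. If $a$ is cohomologous to a non-zero constant, in the sense that there exist a H\"older function $u$ and $c \in \R \setminus \{0\}$ with $a(x,t) - ct = u(X^t(x)) - u(x)$, then for any periodic orbit $\gamma$ of period $|\gamma|$ we have $a(\gamma) = c|\gamma|$, which is non-zero. Hence $\cP_{T,a} = \emptyset$ for all sufficiently large $T$ and the conclusion holds trivially.

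The core case is when $a$ is not cohomologous to any constant. Since $X^t$ is a transitive Anosov flow, I would choose a Markov section and pass to a symbolic suspension model: a suspension flow over a mixing subshift of finite type in which the cocycles $a$ and $B$ descend to H\"older potentials $\psi$ and $\Phi$. The main tool is then the family of twisted Ruelle transfer operators $\mathcal{L}_{\Phi + it\psi}$, $t \in \R$. Because $\psi$ is not cohomologous to a constant, Dolgopyat-type estimates give a spectral gap
$$
\operatorname{Re} P(\Phi + it\psi) < P(\Phi), \qquad t \neq 0,
$$
together with a uniform operator-norm decay as $|t| \to \infty$. Combining these with Parry--Pollicott style contour integration for the weighted orbit counting sums yields a local limit theorem for the distribution of $a(\gamma)$ under $\mu_{T,B}$: the mass assigned to any single value $v \in \R$ tends to zero as $T \to \infty$. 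Taking $v = 0$ contradicts the positive-proportion hypothesis.

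The main obstacle is producing the local limit theorem with enough uniformity to bound the atom at a specific point (not merely the measure of a shrinking interval around it). This requires Dolgopyat's oscillatory integral estimates to obtain uniform quasi-compactness of $\mathcal{L}_{\Phi + it\psi}$ in $t$ over unbounded regions, together with careful bookkeeping of the weighting by $e^{B(\gamma)}$ in place of the classical unweighted setting. This Dolgopyat-type analysis adapted to the weighted case is precisely the technical contribution of~\cite{DMR}.
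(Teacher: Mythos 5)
The first thing to say is that the paper does not prove Theorem~\ref{thm_ppl} at all: it is imported wholesale from Dilsavor--Marshall Reber \cite{DMR} (hence the attribution in the theorem's name), and the only argument the paper itself supplies is the remark following the statement, which checks that the normalization $\mu_{T,B}$ used here and the normalization $\hat\mu_{T,B}$ of \cite[Theorem~1.2]{DMR} give equivalent positive-proportion hypotheses. Your proposal ends in the same place --- the entire analytic core is delegated to \cite{DMR} --- so as a discharge of the proof obligation it amounts to the citation the paper already makes (while omitting the one small thing the paper does verify, the equivalence of the two normalizations, though that is easy). The problem is that the sketch you wrap around the citation, if read as an outline of a proof, has concrete gaps, and it also asserts a characterization of the method of \cite{DMR} that you have not verified and that the present paper nowhere relies on.

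Concretely: (1) $\mu_{T,B}$ is carried by orbits with $|\gamma|\in(T,T+\Delta]$, so any transfer-operator argument must control the joint distribution of the pair $(|\gamma|,a(\gamma))$; this forces a simultaneous twist by a frequency dual to the period (the roof function) and by $it\psi$. The one-parameter family $\mathcal{L}_{\Phi+it\psi}$ alone never sees the period window, so the contour-integration step as described cannot produce asymptotics for $\mu_{T,B}(\cP_{T,a})$. (2) The claim that ``$\psi$ not cohomologous to a constant'' yields uniform operator-norm decay as $|t|\to\infty$ is unsupported: on a symbolic (totally disconnected) model such decay can fail outright for H\"older twists, and Dolgopyat-type high-frequency estimates are tied to non-integrability conditions in the flow direction; they are not available for an arbitrary H\"older cocycle over an arbitrary transitive Anosov flow, which is the generality of the statement (note the theorem must also cover constant-roof suspensions). (3) The obstacle you single out --- bounding an exact atom rather than a shrinking interval --- is not the real difficulty: when $a$ is not cohomologous to a constant, under $\mu_{T,B}$ the values $a(\gamma)$ have mean of order $T\int\psi\,d\mu_B$ and variance of order $T$, so a CLT-level anti-concentration bound for a \emph{fixed} interval about $0$ already forces $\mu_{T,B}(\cP_{T,a})\to0$; no pointwise local limit theorem is needed, and the subcase of nonzero mean is a large-deviations statement rather than an LLT. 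Finally, if your aim is a self-contained argument for what this paper actually uses, note that one of its two applications needs only the weaker Alternate Livshits Theorem (Theorem~\ref{thm_livshits}), for which the appendix gives an elementary proof via Bowen's approximation of equilibrium states by weighted periodic orbit measures; only the contact-case proposition in Section~3 genuinely invokes the positive-proportion version, and there the paper treats it strictly as a black box from \cite{DMR}.
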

\begin{remark} This theorem is formulated in a slightly different, but equivalent way in~\cite[Theorem~1.2]{DMR}. Namely, the approximating measures are defined using a different normalization
$$
\hat \mu_{T,B}=\frac{1}{\sum_{\gamma\in\cP_T}e^{B(\gamma)}} \sum_{\gamma\in\cP_T} \frac{e^{B(\gamma)}}{|\gamma|}\delta_{\gamma}
$$
and, accordingly, the positive proportion assumption in~\cite{DMR} is stated as $\limsup_{T\to\infty}\hat \mu_{T,B}(\cP_{T,a})>0$. We note that due to the obvious two-sided inequality
$$
\frac{T}{T+\Delta}\hat \mu_{T,B}(\cP_{T,a})\le \mu_{T,B}(\cP_{T,a}) \le \frac{T+\Delta}{T}\hat \mu_{T,B}(\cP_{T,a})
$$
these positive proportion assumptions are equivalent.
\end{remark}

In the course of the proof we will need to apply the above theorem two times. For the first application another version which we call Alternate Livshits Theorem suffices. This theorem allows for a different, quite elementary and soft proof based on Bowen's approximation formula for equilibrium states~\cite{bowen} which we give in the appendix.

\begin{theorem}[Alternate Livshits Theorem]
\label{thm_livshits}
Let $X^t\colon M\to M$ be a transitive Anosov flow and assume that $a_1, a_2, \ldots a_N\colon M\times \R\to \R$ are H\"older cocycles such that that for all periodic orbits $\gamma$ there exist an $i\in[1,N]$ such that $a_i$ vanishes on $\gamma$: $a_i(x,|\gamma|)=0$, $x\in \gamma$, $X^{|\gamma|}(x)=x$.
Then at least one of the cocycles is a coboundary, that is, there exists at least one $j\in[1,N]$ such that
$$
a_j(x,t)=u(X^t(x))-u(x)
$$
for some H\"older continuous function $u$.
\end{theorem}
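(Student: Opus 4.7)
The natural strategy is pigeonhole followed by the Positive Proportion Livshits Theorem stated just above. The hypothesis says precisely that $\cP_T=\bigcup_{i=1}^N \cP_{T,a_i}$ for every $T$, so the union bound gives
$$
\max_{1\le i\le N}\#\cP_{T,a_i}\;\ge\;\frac{\#\cP_T}{N} \quad\text{for every } T>0.
$$
Since the maximizing index lies in the finite set $\{1,\dots,N\}$, a second pigeonhole produces a single $j\in\{1,\dots,N\}$ and an unbounded sequence $T_n\to\infty$ along which $\#\cP_{T_n,a_j}\ge \#\cP_{T_n}/N$.

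Next I would convert this count-based positive proportion into a measure-theoretic one in order to feed it into Theorem~\ref{thm_ppl}. Take $B\equiv 0$, so $\mu_B$ is the measure of maximal entropy and the approximating measures simplify to
$$
\mu_{T,0}(\cP_{T,a_j})\;=\;\frac{\sum_{\gamma\in\cP_{T,a_j}}|\gamma|}{\sum_{\gamma\in\cP_T}|\gamma|}.
$$
Every $\gamma\in\cP_T$ satisfies $|\gamma|\in (T,T+\Delta]$, so the ratio on the right is pinched between $\tfrac{T}{T+\Delta}\cdot\#\cP_{T,a_j}/\#\cP_T$ and $\tfrac{T+\Delta}{T}\cdot\#\cP_{T,a_j}/\#\cP_T$. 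Consequently, along the subsequence $T_n$,
$$
\mu_{T_n,0}(\cP_{T_n,a_j})\;\ge\;\frac{T_n}{T_n+\Delta}\cdot\frac{1}{N}\;\longrightarrow\;\frac{1}{N}\;>\;0.
$$
In particular, $\limsup_{T\to\infty}\mu_{T,0}(\cP_{T,a_j})>0$, and Theorem~\ref{thm_ppl} applied to the cocycle $a_j$ yields that $a_j$ is a coboundary, as desired.

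This short route bypasses any constructive ergodic theory by quoting Theorem~\ref{thm_ppl}. The authors, however, advertise an \emph{alternate} elementary proof via Bowen's approximation formula, so the real challenge for a self-contained appendix is to reprove just the single conclusion needed here (positive proportion of orbits on which a H\"older cocycle vanishes $\Longrightarrow$ coboundary), without invoking the full strength of~\cite{DMR}. The plan there would be to use weak-$*$ convergence of the normalized orbit measures $\hat\mu_{T,0}$ to the measure of maximal entropy $\mu$, combined with the classical equivalence (via Livshits) between $a_j$ being a coboundary and $a_j$ evaluating to $0$ on every periodic orbit. The main obstacle in that soft approach is handling the fact that the ``$a_j$ vanishes'' condition is a cocycle condition, not the vanishing of an integral of a continuous function, so one must replace $a_j$ by its Hölder function representative and argue that the positive-proportion vanishing of the orbit averages of this representative forces it to be cohomologically trivial, using Bowen's formula applied to suitable Gibbs states to detect any nontrivial cohomology class.
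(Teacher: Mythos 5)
Your argument is correct: the hypothesis does give $\cP_T=\bigcup_i\cP_{T,a_i}$, the union bound plus a pigeonhole over the finite index set yields one $j$ with $\#\cP_{T_n,a_j}\ge\#\cP_{T_n}/N$ along $T_n\to\infty$, and with $B\equiv 0$ the pinching $|\gamma|\in(T,T+\Delta]$ converts this into $\limsup_{T\to\infty}\mu_{T,0}(\cP_{T,a_j})\ge 1/N>0$, so Theorem~\ref{thm_ppl} applies and $a_j$ is a coboundary. However, this is exactly the deduction the paper flags in one line (``Clearly, the Alternate Livshits Theorem is also a corollary of the Positive Proportion Livshits Theorem'') and deliberately does \emph{not} use for the appendix: the whole point of Theorem~\ref{thm_livshits} there is to have a proof independent of the recent Dilsavor--Marshall Reber machinery. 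The paper's appendix proof is genuinely different: it considers the finite family of potentials $\sum_i\alpha_i a_i$ for $\bar\alpha\in\{1,\dots,N+1\}^N$, decomposes Bowen's periodic-orbit approximations of each equilibrium state along the partition of $\cP_T$ into the sets $A_i$ where $a_i$ vanishes, uses ergodicity of equilibrium states to see that each $\mu(\bar\alpha)$ equals some limit measure $\mu^{A_i}(\bar\alpha)$ which is independent of the $i$-th coordinate of $\bar\alpha$, and then a counting pigeonhole ($(N+1)^N>N(N+1)^{N-1}$) produces two tuples differing only in the $i$-th coordinate whose potentials have the same equilibrium state; Bowen's classical theorem on equal equilibrium states then makes $(\alpha_i-\beta_i)a_i$, hence $a_i$, a coboundary. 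So your route buys brevity at the cost of importing the full positive-proportion theorem of~\cite{DMR}, while the paper's route buys self-containedness from only Bowen's approximation formula and Bowen's rigidity of equilibrium states. Your closing sketch of a possible ``soft'' proof (weak-$*$ convergence of $\hat\mu_{T,0}$ to the measure of maximal entropy plus trying to upgrade positive-proportion vanishing directly) is not the paper's mechanism and, as sketched, would essentially amount to reproving a special case of~\cite{DMR}; the idea you are missing is precisely the perturbation of the potential by linear combinations of all the cocycles over a finite grid combined with the pigeonhole on coefficients. This does not affect the validity of your main argument, which stands as a correct (if externally dependent) proof of the statement.
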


Clearly, the Alternate Livshits Theorem is also a corollary of the Positive Proportion Livshits Theorem.
 
\section{Proof of Theorem~\ref{thm2}}

Here we will explain how Theorem~\ref{thm2} follows from Theorem~\ref{thm_tech}, which we proceed to state.

Recall that $X_1^t$ and $X_2^t$ are conjugate Anosov flows: $H\circ X_1^t=X_2^t\circ H$. Given a periodic point $p=X_1^T(p)$ of period $T$ let $\chi_1(p)$ be the positive Lyapunov exponent of $p$ and let $\chi_2((H(p))$ be the positive Lyapunov exponent of $H(p)$; that is
$$
\chi_1(p)=\frac1T\log J^uX_1^T(p),\,\,\,\, \chi_2(H(p))=\frac1T\log J^uX_2^T(H(p)),
$$
The following is a local result which is crucial for the proof of Theorem~\ref{thm2}.
\begin{theorem}
\label{thm_tech}
Let $X_1^t$, $X_2^t$ and $H$ be as in Theorem~\ref{thm2}. Then for every periodic point $p=X_1^T(p)$ of period $T$ the following tetrachotomy holds
\begin{itemize}
\item either $K_1(p, T)=K_2(H(p),T)=0$, where $K_i$ is the longitudinal Anosov cocycle of $X_i^t$, $i=1,2$;
\item or $K_1(p,T)=0$, $K_2(H(p),T)\neq0$ and $\chi_1(p)<\chi_2(H(p))$;
\item or $K_2(H(p),T)=0$, $K_1(p,T)\neq0$ and $\chi_1(p)>\chi_2(H(p))$;
\item or $K_1(p,T)\neq0$, $K_2(H(p),T)\neq0$ and $\chi_1(p)=\chi_2(H(p))$
\end{itemize}
\end{theorem}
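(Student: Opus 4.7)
The plan is to derive the tetrachotomy by comparing the periods of families of periodic orbits shadowing $\gamma=\{X_1^t(p):0\le t\le T\}$ along a homoclinic loop, exploiting the fact that the conjugacy $H$ preserves periods. Fix a transverse homoclinic orbit $\eta$ to $\gamma$ for $X_1^t$, which exists by transitivity. By the Smale--Birkhoff horseshoe theorem, for each large $n$ there is a periodic orbit $\gamma_n$ of $X_1^t$ of period $T_n$ that wraps $n$ times near $\gamma$ before closing via $\eta$. Since $H\gamma_n$ is a periodic orbit of $X_2^t$ of the same period $T_n$ shadowing $H\gamma\cup H\eta$, we obtain two asymptotic expansions of the same function $n\mapsto T_n$, which must agree.

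Working in the adapted coordinates of Section~\ref{sec23}, write the return time of the Poincar\'e map $F_{p,i}$ in the symmetric form
$$
\xi_i(x,y)=\xi_i(x,0)+\xi_i(0,y)-T+xy\,G_i(x,y),\qquad G_i(0,0)=K_i(p,T),
$$
so that the longitudinal Anosov cocycle value appears as the mixed partial of $\xi_i$ at the origin. Writing $T_n=\sum_{k=0}^{n-1}\xi_i(F_{p,i}^k(q_n^{(i)}))+\tau_{h,i}$ and invoking the Moser normal form~\eqref{eq_psi}, the iterates $(a_k,b_k)=F_{p,i}^k(q_n^{(i)})$ satisfy $a_kb_k\approx\mu_i^n x_0^{(i)}y_0^{(i)}$ (since $\det DF_{p,i}(0)=1$ and the nonlinear corrections are controllable), while $G_i(a_k,b_k)\to K_i(p,T)$ on the interior of the orbit. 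Summing then yields an asymptotic of the form
$$
T_n=nT+A_i+B_i\mu_i^n+C_i\,n\mu_i^n+o(n\mu_i^n),\qquad \mu_i=e^{-\chi_i(p)T},
$$
whose leading nontrivial coefficient is $C_i=x_0^{(i)}y_0^{(i)}K_i(p,T)$. A generic choice of $\eta$ ensures $x_0^{(i)}y_0^{(i)}\neq 0$ for both sides (varying $\eta$ among the abundant homoclinic orbits if necessary), so $C_i\neq 0$ iff $K_i(p,T)\neq 0$.

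Matching the two expansions of the common $T_n$ forces $A_1=A_2$ and requires all subsequent terms to cancel. The tetrachotomy emerges from comparing leading decay rates on each side. If $K_1(p,T)$ and $K_2(H(p),T)$ are both nonzero, the dominant contributions $C_i\,n\mu_i^n$ must match, which forces $\mu_1=\mu_2$ and hence $\chi_1(p)=\chi_2(H(p))$ (Case~4). If $K_1(p,T)=0$ while $K_2(H(p),T)\neq 0$, the surviving $C_2\,n\mu_2^n$ on the $X_2$ side must be absorbed into the remainder on the $X_1$ side, which decays at rate $\mu_1^n$ or faster; this is possible only if $\mu_2<\mu_1$, that is, $\chi_1(p)<\chi_2(H(p))$ (Case~2). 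The symmetric situation yields Case~3, and $K_1=K_2=0$ is Case~1.

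The main obstacle I anticipate is the rigorous derivation of the expansion for $T_n$, in particular isolating the $n\mu_i^n$ coefficient as exactly $x_0^{(i)}y_0^{(i)}K_i(p,T)$ and showing the remainder is genuinely $o(n\mu_i^n)$. The Moser-type normal form~\eqref{eq_psi} is available only in low regularity ($C^\theta$ remainders with $\theta=\min(1,r-2)$), so iterates $F_{p,i}^k$ must be analysed carefully through the transition regime $k\sim n/2$, where neither $a_k$ nor $b_k$ is particularly small. The boundary contributions to $\sum_k a_kb_k G_i(a_k,b_k)$ must be shown to live in the $B_i\mu_i^n$ order and not contaminate the $C_i$ coefficient, and the tails of $\sum_k (\xi_i(a_k,0)-T)$ and $\sum_k (\xi_i(0,b_k)-T)$ must be bounded uniformly. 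An appeal to Lemma~\ref{lemma_periodic} (invariance of the cocycle value under change of transversal) should be useful for normalizing away any unwanted first-order Taylor coefficients of $\xi_i$.
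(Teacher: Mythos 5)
Your proposal follows essentially the same route as the paper's Section~4: approximate a homoclinic orbit of $\gamma$ by shadowing periodic orbits $\gamma_n$, expand their periods as $T_n=nT+T'+(\text{bounded coefficient})\cdot K\,n\mu^n+O(\mu^n)$, and compare the two expansions of the common periods under the period-preserving conjugacy to read off the tetrachotomy. The analytic work you flag as the main obstacle is exactly what the paper's Lemmas~4.1--4.4 carry out, streamlined by taking the transversal to contain $W^s_{loc}(p)\cup W^u_{loc}(p)$ in Moser coordinates (so the return time vanishes on the axes and your boundary sums $\sum_k(\xi_i(a_k,0)-T)$, $\sum_k(\xi_i(0,b_k)-T)$ disappear) and by settling for a coefficient of $n\mu^n$ that is merely bounded above and away from zero rather than convergent, which is all your case analysis actually requires.
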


Now consider the cocycle $A(x,t)=\log J^uX_1^t(x)-\log J^uX_2^t(H(x))$ over $X_1^t$. Then, according to the above theorem, we have that over every periodic orbit at least one of the cocycles $K_1$, $K_2\circ H$ or $A$ vanishes. 
 Then the Alternate Livshits Theorem applies to give that at least one of these cocycles is a coboundary.

If $A$ is a coboundary, then all Lyapunov exponents at periodic points match under conjugacy and then de la Llave-Marco-Moriy\'on-Pollicott theorem applies and yields $C^{r_*}$ regularity of the conjugacy.

Hence we only need to consider the case when $K_1$ is a coboundary over $X_1^t$. (If $K_2\circ H$ is coboundary over $X_1^t$ then $K_2$ is coboundary over $X_2^t$ which is an entirely symmetric situation.) By the Foulon-Hasselblatt theorem~\cite{FH}, we conclude that $X_1^t$ is either a contact flow or a constant roof suspension. If $X_1^t$ is a constant roof suspension flow then, in fact, the second flow also has to be a constant roof suspension. Indeed, in this case $M$ is the mapping torus of a hyperbolic automorphism and the second flow also has a global torus section, since the flows are conjugate. Then, since the periods match, the roof function of the second flow has exactly the same sums over periodic orbits as the constant roof of the first flow. Hence, by Livshits theorem, this roof function is cohomologous to the same constant which means that the second flow is also a constant roof suspension. This gives us one of the alternative conclusions of Theorem~\ref{thm2}.

Thus it remains to consider the case when $X_1^t$ is a contact flow. The following proposition completes the proof of Theorem~\ref{thm2} modulo Theorem~\ref{thm_tech}. We note that this proposition improves a theorem of Feldman and Ornstein who proved that a pair of 3-dimensional contact Anosov flows are $C^0$ conjugate if and only if they are $C^1$ conjugate~\cite{FO}.

\begin{proposition}
 Let $X_i^t\colon M\to M$, $i=1,2$, be $C^r$, $r>2$, Anosov flows which are conjugate via a  conjugacy $H$. Assume additionally that $X_1^t$ is a contact flow and that $X_2^t$ is volume preserving. Then $X_2^t$ is also contact and the conjugacy is $C^{r_*}$ regular. 
 \end{proposition}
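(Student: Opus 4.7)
\emph{Strategy and reduction of the tetrachotomy.} The plan is to upgrade $X_2^t$ to a contact flow; once both flows are contact, Feldman--Ornstein~\cite{FO} produces a $C^1$ conjugacy, and the de la Llave--Marco--Moriy\'on--Pollicott theorem (Section~2.1) upgrades the regularity to $C^{r_*}$. Since $X_1^t$ is contact, Foulon--Hasselblatt~\cite{FH} guarantees that the longitudinal Anosov cocycle $K_1$ is a coboundary, hence $K_1(p,T)=0$ on every periodic orbit. Theorem~\ref{thm_tech} then reduces to the dichotomy: for each periodic point $p=X_1^T(p)$, either (i) $K_2(H(p),T)=0$, or (ii) $K_2(H(p),T)\neq 0$ and $\chi_1(p)<\chi_2(H(p))$.

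\emph{Key step: $K_2$ is a coboundary.} Introduce the H\"older cocycle
\[
a(x,t)=\log J^u X_1^t(x)-\log J^u X_2^t(H(x))
\]
over $X_1^t$, whose average on a periodic orbit is $\bar a(\gamma)=\chi_1(\gamma)-\chi_2(H\gamma)$. Pesin's entropy formula for the SRB measure $\omega_1$, the Margulis--Ruelle inequality applied to the $X_2^t$-invariant probability measure $H_*\omega_1$, and the invariance of metric entropy under topological conjugacy together yield
\[
\int a(\cdot,1)\,d\omega_1\;\leq\; h_{\omega_1}(X_1^1)-h_{H_*\omega_1}(X_2^1)\;=\;0.
\]
With $B=-\log J^u X_1^t$, the associated equilibrium state is $\mu_B=\omega_1$ and the approximating measures $\mu_{T,B}$ equidistribute to $\omega_1$. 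Combining the strict negativity of $\bar a$ on case~(ii) orbits with the integrated identity above, one argues that case~(ii) cannot carry full asymptotic $\mu_{T,B}$-proportion, so case~(i) has positive proportion. Theorem~\ref{thm_ppl} then concludes that $K_2\circ H$, and equivalently $K_2$ over $X_2^t$, is a coboundary.

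\emph{Conclusion.} By the Foulon--Hasselblatt theorem applied to $X_2^t$, either $X_2^t$ is contact or it is a constant-roof suspension of a hyperbolic toral automorphism of $\T^2$. The second possibility is excluded topologically: a constant-roof suspension has a jointly integrable bundle $E^s_2\oplus E^u_2$ tangent to the invariant tori, a topological property preserved by the conjugacy $H$; hence $E^s_1\oplus E^u_1=\ker\alpha_1$ would also be integrable, contradicting contact-ness of $X_1^t$ since $d\alpha_1|_{\ker\alpha_1}\neq 0$. So $X_2^t$ is contact. Feldman--Ornstein~\cite{FO} now yields $H\in C^1$, which in turn forces the Lyapunov exponents at corresponding periodic orbits to coincide; the de la Llave--Marco--Moriy\'on--Pollicott theorem then upgrades $H$ to $C^{r_*}$.

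\emph{Main obstacle.} The technical crux is the Positive Proportion Livshits step. The strict inequality $\bar a(\gamma)<0$ on case~(ii) orbits is merely qualitative, with no a priori uniform margin, while $\int a\,d\omega_1\leq 0$ is a priori non-strict; converting this data into the quantitative positive-proportion hypothesis of Theorem~\ref{thm_ppl} requires careful exploitation of the convex-combination structure of $\int a\,d\mu_{T,B}$, Bowen's equidistribution of periodic orbits, and the extremality of the SRB measure in Margulis--Ruelle.
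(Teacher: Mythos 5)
Your overall architecture (show that $K_2$ is a coboundary via a positive--proportion argument, invoke Foulon--Hasselblatt, rule out the suspension alternative, then Feldman--Ornstein plus bootstrap) is the same as the paper's, but the key step is not carried out, and as you have set it up it cannot be. With your choice of reference data --- $\omega_1$ the invariant volume of the contact flow $X_1^t$, $B=-\log J^uX_1^t$, periodic orbits of $X_1^t$ --- the two facts at your disposal point in the \emph{same} direction: on every case (ii) orbit $\bar a(\gamma)=\chi_1-\chi_2<0$, so even if case (ii) carried full asymptotic proportion the periodic-orbit approximation would only give $\int a\,d\omega_1\le 0$ in the limit, which is exactly what Pesin plus Margulis--Ruelle already give. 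There is no two-sided sandwich, no forced equality, and no contradiction; the difficulty you flag as the ``main obstacle'' is not a matter of more careful exploitation of these inequalities --- they simply do not suffice.

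The paper resolves this by reversing the roles of the two flows and by using a rigidity statement rather than a quantitative count. It takes $\omega$ to be the invariant volume of $X_2^t$ and $B=-\log J^uX_2^t$, so that $\mu_B=\omega$, and assumes for contradiction that $\mu_{T,B}(\cP_{T,K_2})\to 0$. Then the orbit-wise inequality $\chi_1<\chi_2$ on case (ii) orbits gives, in the limit, $\int\log J^uX_1^1\circ H^{-1}\,d\omega\le\int\log J^uX_2^1\,d\omega$, while Pesin's formula for $\omega$ under $X_2^t$, invariance of entropy under the conjugacy, and the Margulis--Ruelle inequality for $H^*\omega$ under $X_1^t$ give the \emph{opposite} inequality. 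Hence equality holds in Margulis--Ruelle, and the decisive point missing from your proposal is the equality case: $H^*\omega$ must be absolutely continuous, so $H$ carries an a.c.\ invariant measure to an a.c.\ invariant measure and is therefore smooth by the argument recalled in Section~2.1; a smooth conjugacy then makes $X_2^t$ contact with $K_2$ vanishing on all periodic orbits, contradicting the assumed zero proportion. Two smaller remarks: your appeal to Foulon--Hasselblatt for ``contact $\Rightarrow K_1$ coboundary'' uses the converse direction of their characterization (this is fine, and is what the paper uses implicitly), and the suspension alternative is excluded in the paper via the global torus section and the Livshits theorem, not by transporting joint integrability through a merely $C^0$ conjugacy, which as you state it is only heuristic.
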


\begin{proof}
Our objective is to show that the longitudinal Anosov cocycle $K_2$ of $X_2^t$ is a coboundary. Then, by the Foulon-Hasselblatt theorem we have that $X_2^t$ is either contact or a constant roof suspension. Since $X_1^t$ is contact the case when $X_2^t$ is a constant roof suspension is easily ruled out as explained in the paragraphs preceding the proposition.
Hence, we have that both $X_1^t$ and $X_2^t$ are contact and we can use Feldman-Ornstein theorem~\cite{FO} to conclude that $H$ is a $C^1$ diffeomorphism. Then one can use de la Llave bootstrap~\cite{dlL} to gain optimal smoothness $C^{r_*}$. (Also see~\cite{GRH0} for a refined Feldman-Ornstein argument which gives optimal smoothness right away for conjugacy of contact flows.)

Denote by $\omega$ the invariant volume for $X_2^t$ and let $B(x,t)=-\log J^uX_2^t$. Recall that $\omega$ is the equilibrium state for $B$~\cite{B}.

As before, let $\cP_T$ be the set of periodic orbits of $X_2^t$ whose periods lie in the interval $(T, T+\Delta]$ and let
$$
\cP_{T,K_2}=\{\gamma\in\cP_T: \, K_2(p,|\gamma|)=0, p\in\gamma\}
$$
If $\limsup_{T\to\infty}\mu_{T,B}(\cP_{T,K_2})>0$ then $K_2$ is a coboundary by the Positive Proportion Livshits Theorem (Theorem~\ref{thm_ppl}). Hence we need to rule out the following possibility
\begin{equation}
\label{eq_zero}
\lim_{T\to\infty}\mu_{T,B}(\cP_{T,K_2})=0
\end{equation}

Since $K_1$ vanishes on every periodic orbit $\gamma$ of $X_1^t$, by Theorem~\ref{thm_tech}, we have that if $K_2$ does vanish on $H(\gamma)$ then $\chi_1(p)<\chi_2(H(p))$, which justifies the following notation
$$
\cP_{T,\chi_1<\chi_2}=\cP_T\backslash \cP_{T,K_2}
$$
We can decompose the approximating measures $\mu_{T, B}$ accordingly
\begin{multline*}
\mu_{T,B}=\frac{1}{\sum_{\gamma\in\cP_T}|\gamma|e^{B(\gamma)}} \left(\sum_{\gamma\in\cP_{T, K_2}} e^{B(\gamma)}\delta_{\gamma}+\sum_{\gamma\in\cP_{T, \chi_1<\chi_2}} e^{B(\gamma)}\delta_{\gamma}\right)\\
=\mu_{T,B}(\cP_{T,K_2})\frac{1}{\mu_{T,B}(\cP_{T,K_2})}\sum_{\gamma\in\cP_{T, K_2}} e^{B(\gamma)}\delta_{\gamma}\\+
\mu_{T,B}(\cP_{T,\chi_1<\chi_2})\frac{1}{\mu_{T,B}(\cP_{T,\chi_1<\chi_2})}\sum_{\gamma\in\cP_{T, \chi_1<\chi_2}} e^{B(\gamma)}\delta_{\gamma}\\
=\mu_{T,B}(\cP_{T,K_2})\mu_{T,B,K_2}+\mu_{T,B}(\cP_{T,\chi_1<\chi_2})\mu_{T,B,\chi_1<\chi_2},
\end{multline*}
where $\mu_{T,B,K_2}$ and $\mu_{T,B,\chi_1<\chi_2}$ are defined by the last line. In this way, we have a decomposition of $\mu_{T,B}$ as a convex combination of probability measures $\mu_{T,B,K_2}$ and $\mu_{T,B,\chi_1<\chi_2}$. By~\eqref{eq_zero} the coefficients of this decomposition converge to 0 and 1, respectively. Hence, taking the limit as $T\to\infty$ yields
\begin{equation}
\label{eq_wc}
\lim_{T\to\infty}\mu_{T,B,\chi_1<\chi_2}=\lim_{T\to\infty}\mu_{T,B}=\mu_B=\omega
\end{equation}

Now we are ready to make the estimates. We have
\begin{multline*}
\int_M\log J^uX^1_1\circ H^{-1}d\mu_{T,B,\chi_1<\chi_2}\\
=\frac{1}{\mu_{T,B}(\cP_{T,\chi_1<\chi_2})}\sum_{\gamma\in \cP_{T,\chi_1<\chi_2}}e^{B(\gamma)}\int_{H^{-1}(\gamma)}\log J^uX_1^1d\delta_{H^{-1}(\gamma)}
\\=\frac{1}{\mu_{T,B}(\cP_{T,\chi_1<\chi_2})}\sum_{\gamma\in \cP_{T,\chi_1<\chi_2}}e^{B(\gamma)}\chi_1(H^{-1}(\gamma))\\
<\frac{1}{\mu_{T,B}(\cP_{T,\chi_1<\chi_2})}\sum_{\gamma\in \cP_{T,\chi_1<\chi_2}}e^{B(\gamma)}\chi_2(\gamma)\\
=\frac{1}{\mu_{T,B}(\cP_{T,\chi_1<\chi_2})}\sum_{\gamma\in \cP_{T,\chi_1<\chi_2}}e^{B(\gamma)}\int_{\gamma}\log J^uX_2^1d\delta_{\gamma}\\
=\int_M\log J^uX_2^1d\mu_{T,B,\chi_1<\chi_2}
\end{multline*}
Taking the limit of both sides of this inequality and using weak$\,^*$ convergence~\eqref{eq_wc} we obtain
$$
\int_M\log J^uX^1_1\circ H^{-1}d\omega\le \int_M\log J^uX_2^1d\omega
$$

On the other hand, using the Jacobian formula for the positive Lyapunov exponent, the Pesin formula and the Margulis-Ruelle inequality we have
\begin{multline*}
\int_M\log J^uX_2^1d\omega=\chi_2(\omega)=h(\omega, X_2^1)
=h(H^*\omega, X_1^1)\\
\le \chi_1(H^*\omega)
=\int_M\log J^uX_1^1dH^*\omega=\int_M\log J^uX^1_1\circ H^{-1}d\omega
\end{multline*}
We have arrived at opposing inequalities, hence, both must be equalities. In particular, equality is achieved in the Margulis-Ruelle inequality $h(H^*\omega, X_1^1)
\le \chi_1(H^*\omega)$ which can happen if and only if $H^*\omega$ is an absolutely continuous invariant measure. Recall from the discussion in Section~2.1 that if $H$ sends an absolutely continuous invariant measure to an absolutely continuous invariant measure then $H$ is smooth. Hence we have that $X_2^t$ is also contact and has vanishing longitudinal Anosov cocyle which rules out~\eqref{eq_zero}.
\end{proof}

\section{Proof of Theorem~\ref{thm_tech}}

\label{sec4}

In this section we prove Theorem~\ref{thm_tech}, which then completes the proof of Theorem~\ref{thm2}.

It will become clear from the proof that while this is a theorem about two flows, it is, in fact, secretly, a theorem about a single flow $X^t$. Namely, if the longitudinal Anosov cocycle is non-zero at a periodic orbit $\gamma$ then we recover the Lyapunov exponent of $\gamma$ from a sequence of periods of periodic orbits which approximate a homoclinic orbit of $\gamma$. Then, since for conjugate flows periods of corresponding periodic orbits are equal, this allows us to conclude matching of Lyapunov exponents.\footnote{We would like to thank Martin Leguil who pointed out to us that such connections between periods and Lyapunov exponents exist. In particular, similar formulas were extensively used in the billiards setting~\cite{HKS, BDKL,DKL}. This approach allowed us to replace our earlier ``fractal graph argument'' with an easier and shorter approximation argument.}

Let $X^t$ be a 3-dimensional volume preserving Anosov flow and let $\gamma$ be a periodic orbit with a base-point $p\in\gamma$ and period $T_0$. Let $\cT$ be a $C^r$ smooth transversal through $p$ which contains stable and unstable manifolds of $p$. We denote by $F\colon F^{-1}(\cT)\to \cT$ the local return map to $\cT$ and by $\tau+T_0\colon F^{-1}(\cT)\to \R$ the return time to $\cT$, that is, $\tau$ is defined by $X^{\tau(x)+T_0}(x)=F(x)$. Because stable and unstable manifolds are invariant we have that $\tau$ vanishes on $W^s_{loc}(p)\cup W^u_{loc}(p)$, where $W^s_{loc}(p)$ and $W^u_{loc}(p)$ are connected components of $p$ of $W^s(p)\cap\cT$ and $W^u(p)\cap\cT$, respectively. 
We equip $\cT$ with Moser coordinates so that $F$ has the form~\eqref{eq_moser}
\begin{equation*}
F(x,y)=(\mu x+xy\phi_1(x,y), \mu^{-1}y+xy\phi_2(x,y) ).
\end{equation*}
Recall that in these coordinates we still have that both $F$ and $\tau$ are $C^{r}$ smooth. As before, we let $\theta=\min\{1,r-2\}$. Since we assumed that $r>2$ we have $\theta>0$. Recall that $\phi_i(x,y)\le C(|x|^\theta+|y|^\theta)$.

Now consider any orbit $\cO$ homoclinic (that is, bi-asymptotic) to the orbit of $p$. Let $p_{in}$ be the first point (with respect to time order on $\cO$) in the intersection $\cO\cap W^s_{loc}(p)$ and let $p_{out}$ be the last point in $\cO\cap W^u_{loc}(p)$. Then we have $X^{T'}(p_{out})=p_{in}$ and the orbit segment $\{X^t(p_{out}): t<0<T'\}$ is disjoint with $\cT$.

\begin{figure}[ht]
\centering
\includegraphics[width=\textwidth]{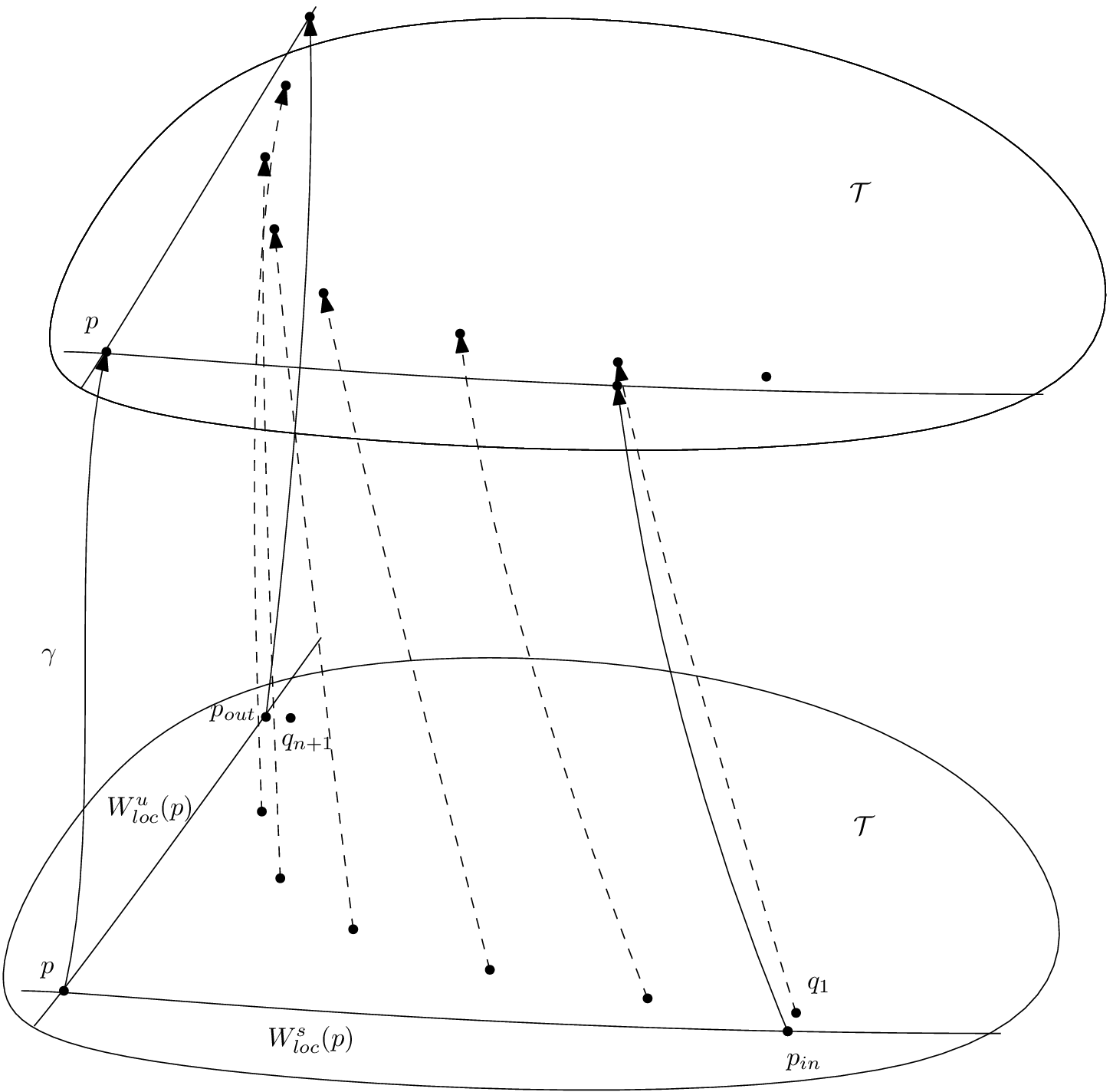}
\end{figure}

The forward orbit $\{F^i(p_{in}); i\ge 0\}$ converges to $p$ along $W^s_{loc}(p)$ and the backward orbit $\{F^{-i}(p_{out}); i\ge 0\}$ converges to $p$ along $W^u_{loc}(p)$. Hence the segment of $\cO$ from $F^{-i_1}(p_{out})$ to $F^{i_2}(p_{in})$ forms a pseudo-orbit for $X^t$ which can be shadowed by a periodic orbit $\gamma_n$ according to the Anosov closing lemma. By appropriately choosing $i_1$ and $i_2$ with $|i_1-i_2|\le 1$ we can arrange that $\gamma_n$ intersects $\cT$ at $n+1$ points $q_1, q_2, \ldots q_{n+1}$ (ordered with respect to time direction on $\gamma_n$) with $q_1$ being close to $p_{in}$ and $q_{n+1}$ being close to $p_{out}$. We denote by $(x_i,y_i)$ the coordinates of $q_i$ and by $x_{in}$ and $y_{out}$ the $x$-coordinate of $p_{in}$ and the $y$-coordinate of $p_{out}$, respectively. From the shadowing property it is clear that $q_1$ is very close to $p_{in}$ and $q_{n+1}$ is very close to $p_{out}$. The next lemma makes it quantitative.
\begin{lemma} 
\label{lemma1}
The coordinates of $q_1$ and $q_{n+1}$ satisfy the following estimates with constants uniform in $n$
$$
c_1\mu^n\le |y_1|\le c_2\mu^n,\,\,\, c_1\mu^n\le |x_1-x_{in}|\le c_2\mu^n,\,\,\, 
$$
and
$$
c_1\mu^n\le |x_{n+1}|\le c_2\mu^n,\,\,\, c_1\mu^n\le |y_{n+1}-y_{out}|\le c_2\mu^n.\,\,\, 
$$
\end{lemma}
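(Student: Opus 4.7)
The plan is to combine two relations satisfied by the periodic orbit $\gamma_n$: the forward iteration $q_{n+1}=F^n(q_1)$ under the return map, and the ``long arc'' closing $q_1=G(q_{n+1})$, where $G\colon(\cT,p_{out})\to(\cT,p_{in})$ is the local Poincar\'e map induced by flowing along the excursion segment of $\cO$ of length approximately $T'$. Solving these two relations simultaneously for large $n$ will produce all four two-sided estimates.

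First I would control the iteration. Writing $F(x,y)=\bigl(x(\mu+y\phi_1(x,y)),\,y(\mu^{-1}+x\phi_2(x,y))\bigr)$ and iterating multiplicatively,
\[
x_{n+1}=\mu^n x_1\prod_{i=1}^n\!\bigl(1+\mu^{-1}y_i\phi_1(x_i,y_i)\bigr),\qquad y_{n+1}=\mu^{-n}y_1\prod_{i=1}^n\!\bigl(1+\mu\,x_i\phi_2(x_i,y_i)\bigr).
\]
A bootstrap using the ``pure hyperbolic'' ansatz $|x_i|\lesssim \mu^{i-1}|x_{in}|$, $|y_i|\lesssim \mu^{n-i+1}|y_{out}|$ combined with the H\"older estimate $|\phi_j(x,y)|\lesssim |x|^\theta+|y|^\theta$ keeps the logarithmic sums $\sum_i |y_i|(|x_i|^\theta+|y_i|^\theta)$ and $\sum_i |x_i|(|x_i|^\theta+|y_i|^\theta)$ geometric of total mass $O(|x_{in}|^\theta+|y_{out}|^\theta)$. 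Hence both products stay in $[1/C, C]$ uniformly in $n$, which validates the ansatz and yields $x_{n+1}\asymp \mu^n x_1$ and $y_{n+1}\asymp \mu^{-n}y_1$.

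Next I would Taylor expand the closing relation at $p_{out}$. Writing $L:=DG(p_{out})=\bigl(\begin{smallmatrix}a & b\\ c & d\end{smallmatrix}\bigr)$, one has $(x_1-x_{in},\,y_1)=L(x_{n+1},\,y_{n+1}-y_{out})+O(|q_{n+1}-p_{out}|^2)$. Transversality of the homoclinic intersection forces $d\neq 0$ (since $L\partial_y$ is tangent to the global unstable manifold of $p$ at $p_{in}$, which is transverse to $W^s_{loc}(p)=\{y=0\}$), and $\det L=\pm 1$ by volume preservation. Substituting the iteration relations into this $2\times 2$ linear system and solving asymptotically for large $n$ yields $y_1=\mu^n y_{out}(1+o(1))$ and $x_{n+1}=\mu^n x_{in}(1+o(1))$, the first and third estimates. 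Feeding these leading terms back into the scalar closing equations produces $x_1-x_{in}$ and $y_{n+1}-y_{out}$ of the same size $\mu^n$, with explicit leading coefficients $[\pm x_{in}+by_{out}]/d$ and $(y_{out}-cx_{in})/d$ respectively, yielding the remaining two-sided bounds.

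The main obstacle is the low-regularity bookkeeping in the iteration step: with $r$ only slightly above $2$, the H\"older exponent $\theta=\min\{1,r-2\}$ can be close to zero, so convergence of the product-of-corrections argument must exploit both $\theta>0$ and the exponentially decaying profiles of $(x_i)$ and $(y_i)$ toward the two endpoints. A secondary concern is non-vanishing of the leading $\mu^n$ coefficients above; this is a generic (one-parameter) condition on the homoclinic orbit $\cO$ that can always be arranged when the lemma is deployed in the larger argument.
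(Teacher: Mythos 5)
Your proposal is sound and, on the local part, proves the same thing the paper proves by a slightly different device: the paper shortcuts the iteration inside $\cT$ by passing to a $C^1$ linearization of $F$ (Hartman/Belitskii), so that $\tilde x_{n+1}=\mu^n\tilde x_1$ and $\tilde y_1=\mu^n\tilde y_{n+1}$ hold exactly and the first and third estimates follow from $q_1\to p_{in}$, $q_{n+1}\to p_{out}$; your multiplicative bootstrap in Moser coordinates establishes the same comparisons $x_{n+1}\asymp\mu^n x_1$, $y_1\asymp\mu^n y_{n+1}$ by hand, and is essentially the computation the paper runs anyway in the proof of Lemma~\ref{lemma_2} leading to \eqref{eq_xy_bound}. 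Two small overstatements: your products are bounded in $[1/C,C]$ but not $1+o(1)$ (the factors with $i$ near $n$ have $|y_i|\asymp\mu|y_{out}|$, which is not small), so conclusions like $y_1=\mu^n y_{out}(1+o(1))$ should be weakened to $|y_1|\asymp\mu^n$, which is all the lemma asserts; and $\det DG(p_{out})=\pm1$ holds only up to bounded density factors in these coordinates, though all you use is that it is bounded away from $0$. Your treatment of the second and fourth estimates via the excursion map $G$ and $L=DG(p_{out})=\bigl(\begin{smallmatrix}a&b\\ c&d\end{smallmatrix}\bigr)$ is more explicit than the paper's ``similar considerations,'' and $d\neq0$ is indeed guaranteed, since for an Anosov flow every homoclinic intersection is automatically transverse.

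The caveat you flag at the end is a genuine one, and you are right not to sweep it under the rug: the lower bounds $|x_1-x_{in}|\ge c_1\mu^n$ and $|y_{n+1}-y_{out}|\ge c_1\mu^n$ come with leading coefficients $(\det L\, x_{in}+b\,y_{out})/d$ and $(y_{out}-c\,x_{in})/d$, and nothing structural forbids either from vanishing for a particular homoclinic orbit $\cO$; so as written your argument (like the paper's terse one) only yields those two lower bounds under a nondegeneracy proviso, and if you take the ``choose $\cO$ generically'' route you still owe an argument that a nondegenerate homoclinic orbit exists. The practical resolution is that these two lower bounds are never used later: the sequel only needs the upper bounds of all four estimates (to get $|T_n'-T'|\le C\mu^n$ in \eqref{eq_t_prime}) together with the two-sided bounds on $|y_1|$ and $|x_{n+1}|$ (to get \eqref{eq_xy_bound}), all of which your argument delivers unconditionally from $d\neq0$. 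So either drop the two contentious lower bounds from what you claim, or supply the missing existence argument for a nondegenerate $\cO$; as it stands, the proposal proves a slightly weaker statement than the lemma's literal wording, but one fully sufficient for its role in the paper.
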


\begin{proof} Probably the simplest way to verify these inequalities is to use $C^1$-linearization. It is well-known that $\cT$ admits $C^1$ coordinate system $(\tilde x,\tilde y)$ which makes dynamics fully linear~\cite{H, Bel}
$$
F(\tilde x,\tilde y)=(\mu\tilde x,\mu^{-1}\tilde y).
$$
This coordinate change has the form
$$
(\tilde x,\tilde y)=(x+x\psi_1(x,y), y+y\psi_2(x,y)).
$$
where $\psi_1$ and $\psi_2$ are continuous (and hence bounded) functions on $\cT$.

Since the points $q_{n+1}$ converge to $p_{out}$ as $n\to\infty$ we have $\bar c_1\le |\tilde y_{n+1}|\le\bar c_2$ for some positive $c_1$ and $c_2$ and all $n$. Then iterating $n$ times backwards with linear dynamics yields $\bar c_1\mu^n\le |\tilde y_{1}|\le\bar c_2\mu^n$. Observing that $|\tilde y_1/y_1|$ is uniformly bounded above and  below yields the first posited inequality. The second inequality follows from similar considerations and the remaining inequalities also follow using the same argument by iterating forward instead.
\end{proof}

From the lemma we have $\|q_1-p_{in}\|\le C\mu^n$ and $\|q_{n+1}-p_{out}\|\le C\mu^n$. Denote by $T_n$ the period of $\gamma_n$ and by $T_n'$ the time from $q_{n+1}$ to $q_1$, that is, $X^{T'_n}(q_{n+1})=q_{1}$. Then, since $\cT$ is transverse to the flow, we have
\begin{equation}
\label{eq_t_prime}
|T_n'-T'|\le C (\|q_{n+1}-p_{out}\|+\|q_1-p_{in}\|)\le C\mu^n
\end{equation}

\begin{lemma} 
\label{lemma_2}
We have the following asymptotic formula for $T'$
$$
T'=\lim_{n\to\infty} T_n-nT_0
$$
Hence the value $T'$ is determined by the periods of periodic orbits.
\end{lemma}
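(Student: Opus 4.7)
The plan is to decompose $T_n$ into the total return time of the $n$ near-$\gamma$ intersections plus the time $T_n'$ of the homoclinic excursion, then show the first piece is $nT_0+o(1)$ while the second tends to $T'$.

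Since $q_{i+1}=F(q_i)$ for $i=1,\dots,n$, the time from $q_i$ to $q_{i+1}$ along the flow equals $T_0+\tau(q_i)$ by definition of $\tau$, while the time from $q_{n+1}$ back to $q_1$ equals $T_n'$. Summing,
$$
T_n \;=\; nT_0 \;+\; \sum_{i=1}^n \tau(q_i) \;+\; T_n'.
$$
By \eqref{eq_t_prime} we already have $T_n'\to T'$, so it remains to show $\sum_{i=1}^n\tau(q_i)\to 0$.

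For this I would exploit the fact that $\tau$ is $C^r$, $r>2$, and vanishes identically on $W^s_{loc}(p)\cup W^u_{loc}(p)$, i.e.\ on the two coordinate axes in the Moser chart. Two applications of Hadamard's lemma yield
$$
\tau(x,y) \;=\; xy\,\tilde\tau(x,y),
$$
with $\tilde\tau$ continuous and bounded on $\cT$. Thus the task reduces to controlling $|x_i y_i|$ uniformly in $i$. Here I would reuse the $C^1$-linearization $(\tilde x,\tilde y)=(x+x\psi_1,y+y\psi_2)$ employed in the proof of Lemma~\ref{lemma1}, in which $F$ becomes diagonal and $\tilde q_i=(\mu^{i-1}\tilde x_1,\mu^{-(i-1)}\tilde y_1)$. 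Lemma~\ref{lemma1} gives $|\tilde y_1|\le c_2\mu^n$, and $|\tilde x_1|$ stays bounded (it is close to $x_{in}$). Hence
$$
|\tilde x_i|\le C\mu^{i-1},\qquad |\tilde y_i|\le C\mu^{n+1-i},\qquad |\tilde x_i\tilde y_i|\le C^2\mu^n,
$$
and passing back to Moser coordinates through the bounded $C^1$ change of variables yields $|x_i y_i|\le C'\mu^n$ for all $1\le i\le n+1$.

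Combining the two ingredients, $|\tau(q_i)|\le C''\mu^n$ for each $i$, so
$$
\Big|\sum_{i=1}^n \tau(q_i)\Big| \;\le\; n\,C''\mu^n \;\xrightarrow[n\to\infty]{}\; 0,
$$
and together with $T_n'\to T'$ this proves $\lim_{n\to\infty}(T_n - nT_0)=T'$. The one conceptual step is the quadratic vanishing $\tau(x,y)=xy\,\tilde\tau(x,y)$, which is forced by flow-invariance of the local stable and unstable manifolds of $p$; after that, the only estimate required is the geometric sum $n\mu^n\to 0$, which is cheap once the linearization from Lemma~\ref{lemma1} is invoked.
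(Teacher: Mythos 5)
Your proof is correct and follows essentially the same route as the paper: the same decomposition $T_n=nT_0+\sum_{i=1}^n\tau(q_i)+T_n'$, the convergence $T_n'\to T'$ from \eqref{eq_t_prime}, and the quadratic vanishing $|\tau(x,y)|\le C|xy|$ forced by invariance of the local stable and unstable manifolds. The only divergence is how you obtain $|x_iy_i|\le C\mu^n$: you use the $C^1$ linearization from Lemma~\ref{lemma1}, which is exactly the alternative the paper records in the remark immediately after its proof, whereas the written argument runs an induction on $|x_iy_i|$ via the Moser normal form \eqref{eq_moser} (which additionally yields the lower bound $|x_iy_i|\ge C_1\mu^n$ of \eqref{eq_xy_bound}, needed later for Lemma~\ref{lemma_4} but not for this lemma).
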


\begin{proof}
Because $\tau$ vanishes on $W^s_{loc}(p)\cup W^u_{loc}(p)$ we have that $|\tau(x,y)|\le c|xy|$. Since orbit $\gamma_n$ intersects $\cT$ $n+1$ times we can write $T_n$ as the following sum of $n+1$ terms.
$$
T_n=T_n'+\sum_{i=1}^n(\tau(x_i,y_i)+T_0)\le T_n'+nT_0+C\sum_{i=1}^n c|x_iy_i|.
$$
Here $(x_i,y_i)$ are the coordinates of $q_i$. Since by~\eqref{eq_t_prime} we have $T_n'\to T'$ as $n\to\infty$, to establish the lemma it remains to prove that the above sum converges to 0. In fact, we will prove that $\sum_{i=1}^n|x_iy_i|=O(n\mu^n)$ as $n\to\infty$ which will also be helpful for the next lemma.

First note that we have $|x_i|\le C(\mu^+)^i$ and $|y_i|\le C(\mu^+)^{n-i}$ for some $C>0$ and some $\mu^+\in(\mu,1)$. Indeed, since $F$ is $C^1$ close to the linear map $(x,y)\mapsto (\mu x,\mu^{-1}y)$, we have that $x_{i+1}\le\mu^+x_i$ and $y_i\le \mu^+y_{i+1}$ for all $(x_i,y_i)$ which are sufficiently close to the origin. Hence, by taking a smaller transversal $\cT$ (or, equivalently, replacing $p_{in}$ and $p_{out}$ with $F^k(p_{in})$ and $F^{-k}(p_{out})$, respectively, for some large $k$) we can assume that we have such exponential bounds on $x_i$ and $y_i$.

Recall that we have $|x_1y_1|\le C\mu^n$ by Lemma~\ref{lemma1} and we would like to bound the products $|x_iy_i|$ for all $i=1,\ldots n$. We can do so using induction
\begin{multline*}
|x_{i+1}y_{i+1}|=|(\mu x_i+x_iy_i\phi_1(x_i,y_i))(\mu^{-1}y_i+x_iy_i\phi_2(x_i,y_i))|\\
=|x_{i}y_{i}||1+\mu x_i\phi_2(x_i,y_i)+\mu^{-1}y_i\phi_1(x_i,y_i)+x_iy_i\phi_1(x_i,y_i)\phi_2(x_i,y_i)|\\
\le |x_{i}y_{i}|(1+C(\mu^+)^i+C(\mu^+)^{n-i}).
\end{multline*}
From convergence of geometric series, it is a standard calculus exercise to check that the products
$$
\prod_{i=1}^n(1+C(\mu^+)^i+C(\mu^+)^{n-i})
$$
are bounded uniformly in $n$. Therefore, by induction, we have
\begin{equation*}
|x_iy_i|\le C_2\mu^n
\end{equation*}
for some $C_2>0$ and for all $i=1,\ldots n$ and $n\ge 1$. Hence, $\sum_{i=1}^n|x_iy_i|=O(n\mu^n)$. Also note that by Lemma~\ref{lemma1} $|x_1y_1|\ge c\mu^n$ and we also have
$$
|x_{i+1}y_{i+1}|\ge  |x_{i}y_{i}|(1-C(\mu^+)^i-C(\mu^+)^{n-i}),
$$
which, again by induction, implies  a uniform lower bound. Hence we, in fact, have
\begin{equation}
\label{eq_xy_bound}
C_1\mu^n\le|x_iy_i|\le C_2\mu^n
\end{equation}
with some $C_1$ and $C_2$ which do not depend on $n$.
\end{proof}

\begin{remark} Alternatively, this last argument could  utilize $C^1$ linearization again. Namely, for fully linearized system $|\tilde x_i\tilde y_i|$ is independent of $i$ and proportional to $\mu^n$ and we have  $|x_iy_i|\asymp|\tilde x_i\tilde y_i|$.
\end{remark}

\begin{lemma} 
\label{lemma_4}
The periods $T_n$ of the periodic orbits $\gamma_n$ admit the following asymptotic expansion
$$
T_n=nT_0+T'+c_nKn\mu^n+O(\mu^n),
$$
where $K$ is value of the longitudinal Anosov cocycle on $\gamma$ and the sequence of constants $\{c_n; n\ge 1\}$ is uniformly bounded above and below.
\end{lemma}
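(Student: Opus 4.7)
The plan is to start from the exact decomposition obtained in the proof of Lemma~\ref{lemma_2},
\[
T_n = T_n' + nT_0 + \sum_{i=1}^n \tau(x_i, y_i),
\]
and combine it with the bound $|T_n'-T'|=O(\mu^n)$ from~\eqref{eq_t_prime}. This reduces the statement to proving that
\[
\sum_{i=1}^n \tau(x_i, y_i) = c_n K n\mu^n + O(\mu^n)
\]
for some sequence $c_n$ lying in a fixed interval $[C_1,C_2]\subset(0,\infty)$.

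The heart of the argument is a low-regularity factorization of $\tau$. In Moser coordinates the local stable and unstable manifolds of $p$ are the two axes, and both are $X^{T_0}$-invariant, so $\tau$ vanishes identically on $\{x=0\}\cup\{y=0\}$. Since $\tau$ is $C^{r}$ with $r>2$, hence $C^{2+\theta}$ with $\theta=\min\{1,r-2\}>0$, applying the fundamental theorem of calculus twice yields the identity
\[
\tau(x,y) = xy\cdot g(x,y),\qquad g(x,y):=\int_0^1\!\!\int_0^1 \frac{\partial^2\tau}{\partial x\,\partial y}(tx,sy)\,dt\,ds,
\]
with $g\in C^\theta$. Moreover $g(0,0)=\frac{\partial^2\tau}{\partial x\partial y}(0,0)$, which by the definition~\eqref{eq_AC} of the longitudinal Anosov cocycle and by Lemma~\ref{lemma_periodic} is exactly $K$. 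The $C^\theta$ regularity of $g$ then gives $|g(x,y)-K|\le C(|x|^\theta+|y|^\theta)$ near the origin.

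Substituting into the sum produces a main term $K\sum_i x_iy_i$ and an error term $\sum_i x_iy_i(g(x_i,y_i)-K)$. Defining
\[
c_n := \frac{1}{n\mu^n}\sum_{i=1}^n x_iy_i,
\]
the estimate~\eqref{eq_xy_bound} gives $C_1\le c_n\le C_2$ and turns the main term into exactly $Kc_n n\mu^n$. For the error I would combine $|x_iy_i|\le C_2\mu^n$ with the exponential bounds $|x_i|\le C(\mu^+)^i$ and $|y_i|\le C(\mu^+)^{n-i}$ from the proof of Lemma~\ref{lemma_2}:
\[
\Bigl|\sum_{i=1}^n x_iy_i\bigl(g(x_i,y_i)-K\bigr)\Bigr|\le C\mu^n\sum_{i=1}^n\Bigl((\mu^+)^{\theta i}+(\mu^+)^{\theta(n-i)}\Bigr)=O(\mu^n),
\]
since both geometric series are uniformly bounded in $n$.

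The main obstacle, which the argument above addresses head on, is to produce the factor $g\in C^\theta$ with $g(0,0)=K$ in the low regularity $r>2$ setting, where one cannot appeal to a classical third-order Taylor expansion. The integral representation of $g$ above is the standard workaround and works down to $\tau\in C^{2+\theta}$. Once this factorization is in hand, the remainder of the proof is pure bookkeeping with summable geometric series and the decay rates of $x_i$, $y_i$ already established in Lemmas~\ref{lemma1} and~\ref{lemma_2}.
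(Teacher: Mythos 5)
Your proof is correct and follows essentially the same route as the paper: the same decomposition $T_n=T_n'+nT_0+\sum_i\tau(x_i,y_i)$, the main term $K\sum_i x_iy_i=c_nKn\mu^n$ via~\eqref{eq_xy_bound}, and the error controlled by summable geometric series in $(\mu^+)^{\theta i}$ and $(\mu^+)^{\theta(n-i)}$. The only difference is cosmetic: where the paper asserts the expansion $\tau=Kxy+xyB(x)+xyC(y)+O((xy)^{1+\theta})$, you justify the needed form $\tau=xy\,g(x,y)$ with $g\in C^\theta$, $g(0,0)=K$ via the double-integral representation, which is a perfectly valid (and arguably cleaner) way to obtain the same bound in the low-regularity setting.
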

\begin{remark} Using more delicate estimates one can actually obtain a true asymptotic formula $T_n=nT_0+T'+c_0Kn\mu^n+O(\mu^n)$ where $c_0\neq 0$, however the above lemma is easier to establish and it is sufficient for our purposes.
\end{remark}
\begin{proof}
We have $T_n=T_n'+nT_0+\sum_{i=1}^n\tau(x_i,y_i)$. Using~\eqref{eq_t_prime} we can write
$$
T_n=nT_0+T'+\sum_{i=1}^n\tau(x_i,y_i)+O(\mu^n).
$$
Recall that $\tau$ vanishes to the first order at $p$ and we can expand it as follows
$$
\tau(x,y)=Kxy+xyB(x)+xy C(y)+O((xy)^{1+\theta}),
$$
where $B(x)=O(x^\theta)$ and $C(y)=O(y^\theta)$. Also recall also that $K$ is the value of longitudinal Anosov cocycle $K=K(p,T_0)=\frac{\partial^2\tau}{\partial x\partial y}(0,0)$. 

Now we will split the sum $\sum_{i=1}^n\tau(x_i,y_i)$ into four sums according to the above expansion for $\tau$. In order not to write absolute value bars we can assume that the orbit $\{q_i, i=1,.. n+1\}$ belongs to the first quadrant so that all $x_i$ and $y_i$ are positive (if the orbit belongs to a different quadrant then we can change the orientation of axes accordingly). First, using~\eqref{eq_xy_bound} we have
$$
C_1Kn\mu^n\le\sum_{i=1}^nKx_iy_i\le C_2 Kn\mu^n
$$
Hence, we indeed have that $\sum_{i=1}^nKx_iy_i=c_nKn\mu^n$, where $c_n\in[C_1,C_2]$, $C_1>0$.

Note that to prove the posited formula it remains to show that the remaining three sums are $O(\mu^n)$. Clearly, $\sum_{i=1}^n O((x_iy_i)^{1+\theta})=nO(\mu^{n(1+\theta)})=o(\mu^n)$. Next we have 
$$
\left|\sum_{i=1}^n x_iy_iB(x_i)\right|\le C_2\mu^n \sum_{i=1}^n C x_i^{\theta}\le C_3\mu^n\sum_{i=1}^n(\mu^+)^{i\theta}
$$
Since the latter sum are summable geometric series, we  obtain $\sum_{i=1}^n x_iy_iB(x_i)=O(\mu^n)$. Analogously, we have $\sum_{i=1}^n x_iy_iC(y_i)=O(\mu^n)$, which finishes the proof.
\end{proof}

We can now go back to our setting of conjugate Anosov flows $X_1^t$ and $X_2^t$. We will apply Lemma~\ref{lemma_4} to a periodic orbit $\gamma_1$ of $X_1^t$ and $\gamma_2=H(\gamma_1)$. Note that if $\{\gamma_n\}$ is a sequence of periodic orbits approximating a homoclinic orbit of $\gamma_1$ then  the sequence $\{H(\gamma_n)\}$ approximates a homoclinic orbit of $\gamma_2$. And Lemma~\ref{lemma_4} yields the formula
$$
T_n-nT_0-T'=c_n^jnK_j\mu_j^n+O(\mu_j^n), \,\,\, j=1,2,
$$
where $K_j$ is the value of the longitudinal Anosov cocycle on $\gamma_j$, $\mu_j$ is the smaller eigenvalue of the Poincar\'e map at $\gamma_j$ and $c_n^j$ are some constants uniformly bounded from above and away from zero. Note that $T_0$ and $T_n$ do not have a $j$ subscript because these are lengths of periodic orbits which are the same for both flows since they are conjugate. Also, by Lemma~\ref{lemma_2}, the value of $T'$ is determined by the lengths of periodic orbits, hence, is the same for both flows.

First consider the case when both $K_1$ and $K_2$ do not vanish. In this case we can recover the multipliers $\mu_j$, $j=1,2$, from the periods. Indeed, taking logarithms we have
$$
\log(T_n-nT_0-T')=n\log\mu_j+\log(c_n^jK_jn+O(1)),\,\, j=1,2
$$
Note that, since $K_j\neq0$, we have $c_n^jK_jn+O(1)>0$ for all sufficiently large $n$ and it grows sublinearly. Hence, dividing by $n
$ and taking the limit gives
$$
\log\mu_1=\log\mu_2=\lim_{n\to\infty}\log(T_n-nT_0-T')
$$
Hence the unstable exponents $\chi_1(\gamma_1)=-\log\mu_1$ and $\chi_2(\gamma_2)=-\log\mu_2$ are equal, yielding the last alternative conclusion of Theorem~\ref{thm_tech} in this case.

The case $K_1=K_2=0$ gives the first alternative conclusion of the theorem. Hence we are left consider the second case when $K_1=0$ and $K_2\neq 0$, the remaining third case ($K_1\neq 0$ and $K_2=0$) being fully analogous. In this case we obtain
$$
O(\mu_1^n)=T_n-nT_0-T'=c_n^2nK_2\mu_2^n+O(\mu_2^n)
$$
Hence $n\mu_2^n=O(\mu_1^n)$, which implies that $\mu_1>\mu_2$ and $\chi_1(\gamma_1)=-\log\mu_1<-\log\mu_2=\chi_2(\gamma_2)$.

\section{Proof of Theorem~\ref{thm3}}

We begin with the definition the generalized longitudinal Anosov cocycle. Let $X^t\colon M\to M$ be a volume preserving 3-dimensional Anosov flow and let $\phi\colon M\to\R$ be a function.
Let $\cT_p$, $p\in M$, be the system of adapted transverals for $X^t$ equipped with $(x,y)$-coordinates as defined in Section~\ref{sec23}. For any $p\in M$ and $t\in\R$ let $\xi\colon \cT_p\to\R$ be the first return time from $\cT_p$ to $\cT_{X^t(p)}$. Consider the function $\xi_\phi\colon \cT_p\to\R$ given by
$$
\xi_\phi(q)=\int_0^{\xi(q)}\phi(X^t(q))dt
$$
Define
$$
K_\phi(p,t)=\frac{\partial^2\xi_\phi}{\partial x\partial y}(0,0)
$$
When we need to emphasize dependence on the flow we will additonally use a superscript $K_\phi^X$. The following properties of the generalized longitudinal Anosov cocyle are immediate from the definition.
\begin{enumerate}
\item If $\phi\equiv 1$ then $\xi_\phi=\xi$ and $K_\phi=K$ is the usual longitudinal cocycle defined earlier~\eqref{eq_AC}.
\item Given two functions $\phi,\psi\colon M\to \R$ and two constants $b,c\in\R$ we have $K_{b\phi+c\psi}=bK_{\phi}+cK_{\psi}$.
\item If $\phi>0$ and $Y^t$ is a reparametrization of $X^t$ with generator $Y=\frac1\phi X$ then $K^Y(p, t_\phi)=K^Y_1(p, t_\phi)=K^X_\phi(p,t)$, where $t_\phi=\int_0^t\phi (X^s(p))ds$ (cf. Remark~2.1). In particular, if $K_\phi^X$ is a coboundary over $X^t$ then $K^Y$ is a coboundary over $Y^t$. Indeed, 
if $K_\phi^X(p,t)=u(p)-u(X^t(p))$ then $K^Y(p, t_\phi)=u(p)-u(X^t(p))=u(p)-u(Y^{t_\phi}(p))$.
\end{enumerate}

We have the following generalization of Theorem~\ref{thm_tech}.
\begin{theorem}
\label{thm_tech2}
Let $X_1^t$, $X_2^t$, $\phi_1$, $\phi_2$ and $H$ be as Theorem~\ref{thm3}. Then for every periodic point $p=X_1^T(p)$ of period $T_p$ the following pentachotomy holds
\begin{itemize}
\item either 
$$\int_0^{T_p}\phi_1(X_1^t(p))dt=0;$$
\item or $K_{\phi_1}(p, T_p)=K_{\phi_2}(H(p),T_{H(p)})=0$, where $K_{\phi_i}$ is the generalized longitudinal Anosov cocycle of $(X_i^t,\phi_i)$, $i=1,2$;
\item or $K_{\phi_1}(p,T_p)=0$, $K_{\phi_2}(H(p),T_{H(p)})\neq0$ and $\chi_1(p)<\chi_2(H(p))$;
\item or $K_{\phi_2}(H(p),T_{H(p)})=0$, $K_{\phi_1}(p,T_p)\neq0$ and $\chi_1(p)>\chi_2(H(p))$;
\item or $K_{\phi_1}(p,T_p)\neq0$, $K_{\phi_2}(H(p),T_{H(p)})\neq0$ and $\chi_1(p)=\chi_2(H(p))$.
\end{itemize}
\end{theorem}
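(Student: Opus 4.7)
The plan is to adapt the homoclinic-shadowing argument of Section~\ref{sec4} to the weighted setting, with the periods of $\gamma_n$ throughout replaced by the weighted integrals $\int_{\gamma_n}\phi_j$. Fix a periodic orbit $\gamma$ of $X_1^t$ with base point $p$ and period $T_p$. If $\int_0^{T_p}\phi_1(X_1^t(p))\,dt=0$ then the first alternative of the pentachotomy holds and we are done, so assume otherwise. Choose a homoclinic orbit $\cO$ to $\gamma$ with incoming/outgoing points $p_{in}, p_{out}$ and build, exactly as in Section~\ref{sec4}, approximating periodic orbits $\gamma_n$ of $X_1^t$ whose intersections with a Moser transversal at $p$ are the points $q_i=(x_i,y_i)$. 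Since $H$ is a homeomorphism carrying orbits to orbits, the sets $\gamma_n^{(2)}:=H(\gamma_n)$ are periodic orbits of $X_2^t$ shadowing an analogous pseudo-orbit of $H(\gamma)$, with intersections $q_i^{(2)}$ on a Moser transversal at $H(p)$; let $\mu_j\in(0,1)$ denote the stable multiplier of the Poincar\'e map at the orbit for the flow $X_j^t$.

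For each $j=1,2$ set $\eta_j(q)=\int_0^{\xi_j(q)}\phi_j(X_j^s(q))\,ds$ on the transversal, so that
\begin{equation*}
\int_{\gamma_n^{(j)}}\phi_j = \eta_j'(q_{n+1}^{(j)}) + \sum_{i=1}^n \eta_j(q_i^{(j)}),
\end{equation*}
where $\eta_j'$ records the weighted integral over the long homoclinic excursion. Note that $\eta_1(0,0)=\int_\gamma\phi_1$, $\eta_2(0,0)=\int_{H(\gamma)}\phi_2$, and the mixed partials $\partial_x\partial_y\eta_j(0,0)$ equal $K_{\phi_1}(p,T_p)$ and $K_{\phi_2}(H(p),T_{H(p)})$ respectively. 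The key technical step is the weighted analog of Lemma~\ref{lemma_4}:
\begin{equation*}
\int_{\gamma_n^{(j)}}\phi_j = n\int_\gamma\phi_j + I_j' + c_n^j\, K_{\phi_j}\, n\mu_j^n + O(\mu_j^n),
\end{equation*}
for some constant $I_j'$ and coefficients $c_n^j$ uniformly bounded above and away from zero. One proves it by Taylor-expanding $\eta_j$ at the origin; unlike $\tau$ in Lemma~\ref{lemma_4}, the function $\eta_j$ carries non-zero linear and pure-quadratic terms, but $\sum x_i, \sum y_i, \sum x_i^2, \sum y_i^2$ each converge to a finite limit at rate $O(\mu_j^n)$, and these limits (together with the limit of $\eta_j'(q_{n+1}^{(j)})$) are absorbed into $I_j'$. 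The cross-sum $\sum x_iy_i$, using $|x_iy_i|\asymp \mu_j^n$ from~\eqref{eq_xy_bound}, supplies the main $c_n^j K_{\phi_j} n\mu_j^n$ contribution, while the $C^{2+\theta}$ Taylor remainder integrates to $O(\mu_j^n)$ exactly as in Lemma~\ref{lemma_4}.

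Given this asymptotic, cases 2--5 of the pentachotomy follow as in Section~\ref{sec4}. The matching hypothesis applied to $\gamma$ and to each $\gamma_n$ gives $\int_\gamma\phi_1=\int_{H(\gamma)}\phi_2$ and, passing to the $n\to\infty$ limit, $I_1'=I_2'$; subtracting the two asymptotic expansions yields
\begin{equation*}
c_n^1 K_{\phi_1}(p,T_p)\, n\mu_1^n + O(\mu_1^n) = c_n^2 K_{\phi_2}(H(p),T_{H(p)})\, n\mu_2^n + O(\mu_2^n).
\end{equation*}
If both cocycle values are non-zero, taking logarithms and dividing by $n$ forces $\mu_1=\mu_2$, hence $\chi_1(p)=\chi_2(H(p))$ by the same computation as in Section~\ref{sec4} (case 5); if both vanish we are in case 2; if exactly one vanishes, the surviving side forces a strict inequality between $\mu_1$ and $\mu_2$ whose sign is dictated by which cocycle is non-zero, producing cases 3 and 4.

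The main technical obstacle will be the clean $O(\mu_j^n)$ error in the weighted asymptotic. In Lemma~\ref{lemma_4} this was automatic because $\tau$ vanishes to first order at the origin, so no linear or pure-quadratic terms enter the Taylor expansion. Here $\eta_j$ has genuinely non-zero first and second order diagonal coefficients, and a naive bound on $\sum x_i$ yields only $O(n\mu_j^n)$, which would swamp the main term. The fix is to exploit the second-order vanishing of the Moser correction: the Moser recursion $x_{i+1}-\mu_j x_i = x_iy_i\,\alpha(x_i,y_i)$ with $|\alpha(x,y)|\le C(|x|^\theta+|y|^\theta)$, and the analogous recursion for $y$, show that the errors $x_i-\mu_j^{i-1}x_{in}$ and $y_i-\mu_j^{n-i}y_{out}$ satisfy affine recursions whose forcing terms are controlled by $|x_iy_i|(|x_i|^\theta+|y_i|^\theta)$; this last expression sums geometrically to $O(\mu_j^n)$, yielding the desired clean error for the linear and pure-quadratic sums.
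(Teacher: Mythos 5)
Your route is genuinely different from the paper's (which reduces Theorem~\ref{thm_tech2} to Lemma~\ref{lemma_4} by replacing $\phi_1$ with a cohomologous function positive on $\gamma\cup\cO$, extending it to a globally positive $\hat\phi$, and reparametrizing by $1/\hat\phi$ so that weighted integrals become honest periods -- this is also why the alternative $\int_\gamma\phi_1=0$ appears in the statement at all), and your version of the key asymptotic contains a genuine gap. The claim that the $C^{2+\theta}$ Taylor remainder of $\eta_j$ ``integrates to $O(\mu_j^n)$ exactly as in Lemma~\ref{lemma_4}'' is false. In Lemma~\ref{lemma_4} the remainder is $O((xy)^{1+\theta})$ precisely because $\tau$ vanishes on \emph{both} axes, and since $|x_iy_i|\asymp\mu^n$ uniformly in $i$ by~\eqref{eq_xy_bound}, its sum is $n\,O(\mu^{n(1+\theta)})=o(\mu^n)$. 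Your $\eta_j$ does not vanish on the axes, so the second-order Taylor remainder at the origin is only $O(\|(x,y)\|^{2+\theta})$; at the orbit points near $p_{in}$ and $p_{out}$ this is of size $O(1)$, and the remainder sum converges to a (generally non-zero) constant rather than being $O(\mu_j^n)$: for instance a cubic term $x^3$ in $\eta_j$ contributes $\sum_i x_i^3$, which tends to roughly $x_{in}^3/(1-\mu_j^3)\neq 0$. Since the whole tetrachotomy/pentachotomy analysis hinges on isolating a term of size $n\mu_j^n$ against an $O(\mu_j^n)$ error, an unaccounted $O(1)$ contribution invalidates the expansion as you wrote it; this is exactly the point where the paper warns that a direct rerun of Section~\ref{sec4} ``would require substantial modification.''

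The repair is available with the tools you already list, but it is not ``exactly as in Lemma~\ref{lemma_4}'': write $\eta_j(x,y)=\bigl[\eta_j(x,0)+\eta_j(0,y)-\eta_j(0,0)\bigr]+g_j(x,y)$, where $g_j$ vanishes on both axes and $\partial_x\partial_y g_j(0,0)=\partial_x\partial_y\eta_j(0,0)$. Lemma~\ref{lemma_4}'s analysis applies verbatim to $g_j$ and produces the $c_n^jK_{\phi_j}n\mu_j^n+O(\mu_j^n)$ term, while the \emph{full} axis restrictions (all orders, not just the linear and pure-quadratic terms you track) must be absorbed into $I_j'$, with an $O(\mu_j^n)$ rate of convergence proved by your Moser-recursion estimate $\sum_i|x_i-\mu_j^{i-1}x_1|=O(\mu_j^n)$ (and its $y$-analogue), Lipschitz bounds on $x\mapsto\eta_j(x,0)-\eta_j(0,0)$, and $|x_1-x_{in}|\le C\mu_j^n$ from Lemma~\ref{lemma1}. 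Two further assertions you make without proof also need a line each: that $\partial_x\partial_y\eta_j(0,0)$ computed in Moser coordinates equals $K_{\phi_j}$ as defined via the adapted charts (this requires a weighted analogue of Lemma~\ref{lemma_periodic} plus invariance under coordinate changes between normal-form charts -- less automatic here than for $\tau$, because $\eta_j$ has non-zero gradient at the origin), and that $H(\gamma_n)$ meets the transversal at $H(p)$ the same number of times up to a bounded shift. Once these are supplied your argument works, and in fact would prove the statement without ever invoking the first alternative, in contrast with the paper's reduction, which needs $\int_\gamma\phi_1\neq 0$ to reparametrize.
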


One way to establish Theorem~\ref{thm_tech2} is to carefully repeat all the arguments of Section~\ref{sec4} while replacing the time with appropriate integrals of $\phi_i$. While majority of the arguments remain the same, some steps would require substantial modification. An alternative way is to reduce Theorem~\ref{thm_tech2} to Theorem~\ref{thm_tech}, which is what we will do below.

\begin{proof}
We will denote by $\gamma$ the periodic orbit through the point $p$. We will denote by $T_0=T_p$ the period of $p$ to be consistent with notation used in Section~\ref{sec4}. Also for the rest of the proof we will write $X^t$ and $\phi$ instead of $X_1^t$ and $\phi_1^t$ as the bulk of the proof only considers the first flow.

We can assume that $\int_0^{T_0}\phi(X^t(p))dt>0$. Indeed, if the integral vanishes then it puts us in the first alternative of the theorem and if it is negative we can replace the matching pair $(\phi_1,\phi_2)$ with the matching pair $(-\phi_1,-\phi_2)$ and then argue in exactly same way since the integral of $-\phi_1$ is positive.

As in Section~\ref{sec4} we consider the heteroclinic orbit $\cO$ and a sequence of periodic orbits $\gamma_n$, $n\ge 1$ which approximate $\cO$.

\begin{lemma}
\label{lemma_positive}
The function $\phi$ is cohomologous to a function $\bar\phi$ which is positive on $\gamma\cup\cO$.
\end{lemma}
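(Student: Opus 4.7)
My plan is to construct a smooth coboundary $u$ such that $\bar\phi := \phi + Xu$ is positive on $\gamma\cup\cO$, in two stages. Stage~1 applies a standard averaging trick to replace $\phi$ by a cohomologous function that is identically equal to a positive constant along $\gamma$. Stage~2 applies a compactly supported bump-function perturbation along a finite arc of $\cO$ disjoint from $\gamma$ to correct the remaining (now finitely many) negative values on $\cO$. Because a homoclinic orbit is disjoint from its asymptotic periodic orbit, the two coboundary potentials can be given disjoint supports, so they combine without conflict.

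For Stage~1, set $c := T_0^{-1}\int_0^{T_0}\phi(X^t(p))\,dt$, which is positive by the assumption made just before the lemma. Define $u_1$ on $\gamma$ by $u_1(X^s(p)) := \int_0^s(c-\phi(X^t(p)))\,dt$; this is well-defined on $\gamma$ because the integrand has mean zero over one period, and it is smooth. Extend $u_1$ smoothly from the embedded closed curve $\gamma$ to all of $M$ via a tubular neighborhood. Then $\phi + Xu_1 \equiv c$ along $\gamma$, so after replacing $\phi$ by $\phi + Xu_1$ I may assume $\phi \equiv c > 0$ on $\gamma$.

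For Stage~2, parametrize $\cO = \{X^t(q) : t \in \R\}$. Since $X^t(q) \to \gamma$ as $t \to \pm\infty$ and $\phi$ is continuous with $\phi|_\gamma = c$, there is $T_* > 0$ with $\phi(X^t(q)) > c/2$ for all $|t| \ge T_*$. I then pick a smooth compactly supported $\tilde u_2\colon\R\to\R$, with $\operatorname{supp}\tilde u_2 \subset [-T_*-L, T_*+L]$ for some $L>0$, satisfying $\tilde u_2'(t) > -\phi(X^t(q))$ for every $t\in\R$; concretely, take $\tilde u_2'$ constant $=\max_{|t|\le T_*}|\phi(X^t(q))|+1$ on $[-T_*,T_*]$, constant $=-c/4$ on $[-T_*-L,-T_*]\cup[T_*,T_*+L]$ (safely above $-\phi$ there since $\phi > c/2$), smoothed at transitions, with $L$ chosen large enough that $\int_\R \tilde u_2'\,dt = 0$. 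The compact arc $A := \{X^t(q):|t|\le T_*+L\}$ is smoothly embedded in $M$ (since $q$ is not periodic) and disjoint from $\gamma$, so it admits a tubular neighborhood $U \subset M\setminus\gamma$. Extend $\tilde u_2$ to a smooth $u_2$ on $M$, supported in $U$, with $u_2(X^t(q)) = \tilde u_2(t)$ along $A$ (by multiplying with a transverse bump). Then $\bar\phi := \phi + Xu_2$ satisfies $\bar\phi = c > 0$ on $\gamma$ (where $u_2\equiv 0$); $\bar\phi(X^t(q)) = \phi(X^t(q)) + \tilde u_2'(t) > 0$ on $A$ by design; and $\bar\phi = \phi > c/2$ on the tails of $\cO$ (where $u_2$ vanishes along the orbit). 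Taking $u := u_1 + u_2$ proves the lemma.

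The only step requiring genuine care is the construction of $\tilde u_2$: the compact-support constraint forces $\int\tilde u_2'=0$, which one must reconcile with the pointwise sign condition on the inner window $[-T_*,T_*]$. The buffer intervals of length $L$ on the tails, where $\phi$ is already $>c/2$ and so $\tilde u_2' = -c/4$ is permissible, absorb the positive mass built up in the middle. Everything else (the Livshits-style integration along $\gamma$, the smooth extensions, and the tubular neighborhood of $A$) is routine once one observes that $\gamma$ and $A$ are smoothly embedded $1$-submanifolds of the $3$-manifold $M$.
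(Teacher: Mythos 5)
Your Stage~1 and the one-dimensional construction of $\tilde u_2$ are fine, but Stage~2 has a genuine gap at the parenthetical claim that $\bar\phi=\phi>c/2$ on the tails of $\cO$ ``where $u_2$ vanishes along the orbit.'' Supporting $u_2$ in a tubular neighborhood $U$ of the arc $A$ with $U\cap\gamma=\emptyset$ does not give this: the rest of $\cO$ is not disjoint from $U$. The orbit continues straight into $U$ just past the two endpoints of $A$, and the intermediate parts of the tails may re-enter $U$ arbitrarily close to the interior of $A$ --- nothing in your construction prevents it. At such points $u_2$ need not vanish, and, more to the point, what you actually need there is $Xu_2=du_2(X)=0$, not merely $u_2=0$ at the orbit points: with the product-type extension $u_2=\tilde u_2(t')\beta(v)$ the differential contains the term $\tilde u_2(t')\,d\beta$, which is uncontrolled in the flow direction off the arc (and large if the tube is thin), so $\bar\phi$ could become negative precisely on the tail points you claim are safe.

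The gap is fixable, but the fix has to be said: since $q$ is not periodic and $\overline{\cO}=\cO\cup\gamma$, the set $\overline{\cO}\setminus\{X^t(q):|t|<T_*+L+\eta\}$ is compact and at positive distance from $A$, so you may shrink $U$ to miss it as well as $\gamma$; in addition choose the smoothing so that $\operatorname{supp}\tilde u_2'$ lies strictly inside $(-T_*-L,T_*+L)$, hence $\tilde u_2\equiv0$ near the parameter endpoints and $u_2\equiv0$ on a whole neighborhood of the orbit just beyond the arc. With these two adjustments $Xu_2=0$ on $\gamma$ and on $\cO\setminus A$, and your estimate closes. For comparison, the paper avoids all local surgery: it replaces $\phi$ by the cohomologous flow average $\tilde\phi(x)=\frac1T\int_0^T\phi(X^t(x))\,dt$, which for large $T$ is positive on $\gamma$ and hence on all but a compact piece of $\cO$ of some length $L$, and then averages once more with a window $T\ggg L$; this soft two-step averaging yields positivity on all of $\gamma\cup\cO$ with no extensions, bump functions, or transversality bookkeeping.
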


\begin{proof} Since the integral of $\phi$ over $\gamma$ is positive the function
$$
\tilde \phi(x)=\frac1T\int_0^T\phi(X^t(x))dt
$$
is positive on $\gamma$ if $T$ is chosen to be sufficiently large. The function $\tilde\phi$ is cohomologous to $\phi$ by a standard calculation. Since $\cO$ is bi-asymptotic to $\gamma$ we have that $\tilde \phi$ is positive on all but a finite piece of $\cO$ of length $L$. To make it positive there as well we can repeat the trick and set

$$
\bar \phi(x)=\frac1T\int_0^T\tilde\phi(X^t(x))dt
$$
If $T\ggg L$ then $\bar\phi$ will be positive on all of $\cO$ and, clearly, stays positive on $\gamma$.
\end{proof}

Let $\cU$ be a neighborhood of $\gamma\cup\cO$ such that $\bar\phi|_\cU>0$. Pick any positive smooth function $\hat\phi\colon M\to\R$ such that $\hat\phi|_\cU= \bar\phi|_\cU$. Define
$$
T_0^\phi= \int_{\gamma}\phi dt,\,\,\,\,\,\, T_n^{\phi}=\int_{\gamma_n}\phi dt,\,\, n\ge 1,
$$
and similarly $T_n^{\bar\phi}$ and $T_n^{\hat\phi}$. Since $\phi$ is cohomologous to $\bar\phi$ we have $T_n^\phi=T_n^{\bar\phi}$ and, since $\gamma_n\subset \cU$ for all sufficiently large $n$, we also have that $T_n^{\bar\phi}= T_n^{\hat\phi}$ for all sufficiently large $n$.

Now consider reparametrization $Y^t$ with generator $Y=\frac{1}{\hat\phi}X$. Note that this reparametrization is well defined because $\hat\phi>0$. Also note that the $Y^t$-period of $\gamma_n$ is given by $T_n^{\hat \phi}$. We can apply Lemma~\ref{lemma_4} to $Y^t$ and the sequence of periodic orbits $\gamma_n$ considered as periodic orbits of $Y^t$ to obtain the following.

\begin{lemma} 
The periods $T_n^{\hat\phi}$ of the periodic orbits $\gamma_n$ admit the following asymptotic expansion
$$
T_n^{\hat\phi}=nT_0^{\hat\phi}+T'+c_nK^Yn\mu^n+O(\mu^n),
$$
where $T'$ is a certain number determined by $\{T_n^{\hat\phi}, n\ge 0\}$, $K^Y=K^Y(p, T_0^{\phi})$ is value of the longitudinal Anosov cocycle on $\gamma$, $\mu\in(0,1)$ is the eigenvalue of the return map at $p$, and the sequence of constants $\{c_n; n\ge 1\}$ is uniformly bounded above and below.
\end{lemma}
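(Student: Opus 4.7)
The plan is to apply Lemma~\ref{lemma_4} directly to the reparametrized flow $Y^t$, with the $\gamma_n$ regarded as periodic orbits of $Y^t$, and then unpack the resulting symbols in our current notation.

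First I would verify that the hypotheses of Lemma~\ref{lemma_4} persist under reparametrization. The transversal $\cT$, the Poincar\'e first-return map $F\colon F^{-1}(\cT)\to\cT$, and the local stable and unstable manifolds of $p$ depend only on the oriented orbit foliation of $X^t$, not on the time parametrization, and hence they serve equally well as the Poincar\'e data for $Y^t$. Because $X^t$ is volume preserving, $F$ is area preserving, so the Moser normal form~\eqref{eq_moser} in the $(x,y)$ coordinates on $\cT$ is exactly what $Y^t$ sees as well. The $Y^t$-first-return time based at $p$ equals $\tau^Y(x,y)+T_0^{\hat\phi}$, where
$$
\tau^Y(x,y)=\int_0^{\tau(x,y)+T_0}\hat\phi(X^s(q))\,ds-T_0^{\hat\phi},\quad q=(x,y).
$$
Since $\hat\phi$ is $C^r$ and strictly positive on a neighborhood of $\gamma\cup\cO$, while $\tau$ vanishes on $W^s_{loc}(p)\cup W^u_{loc}(p)$, the function $\tau^Y$ is $C^r$ and vanishes on $W^s_{loc}(p)\cup W^u_{loc}(p)$ as well, exactly as required in the setup of Lemma~\ref{lemma_4}.

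Next I would identify the symbols. The $Y^t$-period of $\gamma_n$ is $\int_{\gamma_n}\hat\phi\,dt=T_n^{\hat\phi}$ and the $Y^t$-period of $\gamma$ at $p$ is $T_0^{\hat\phi}$. The Poincar\'e map at $p$ remains $F$, so the smaller eigenvalue is still $\mu\in(0,1)$. By the definition of the generalized longitudinal Anosov cocycle, the value of the longitudinal Anosov cocycle of $Y^t$ on $\gamma$ equals $\frac{\partial^2\tau^Y}{\partial x\partial y}(0,0)=K^Y(p,T_0^{\hat\phi})$. The constant $T'$ is the $Y^t$-analogue of the number produced in Lemma~\ref{lemma_2}, namely $T'=\lim_{n\to\infty}(T_n^{\hat\phi}-nT_0^{\hat\phi})$, and is therefore determined by the sequence $\{T_n^{\hat\phi}\}$. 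The constants $c_n$ arise, just as in the proof of Lemma~\ref{lemma_4}, from the intrinsic quantities $\sum_{i=1}^n x_iy_i$ of the shadowing points, and so the two-sided bound $C_1\le c_n\le C_2$ is inherited. Substituting into the conclusion of Lemma~\ref{lemma_4} yields precisely
$$
T_n^{\hat\phi}=nT_0^{\hat\phi}+T'+c_n K^Y n\mu^n+O(\mu^n).
$$

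The only point worth double-checking — and the mild obstacle in this reduction — is that Lemma~\ref{lemma_4} was formulated for volume preserving flows, whereas $Y^t$ need not preserve any smooth volume. However, inspecting the proof of Lemma~\ref{lemma_4} one sees that volume preservation of the ambient three-dimensional flow enters only through area preservation of the two-dimensional Poincar\'e return map $F$, which is what allowed us to invoke the Moser normal form. Since $F$ is intrinsic to the orbit foliation of $X^t$, and $X^t$ itself is volume preserving, this property is available for $Y^t$ at no extra cost. All remaining estimates in the proof of Lemma~\ref{lemma_4} use only the Taylor expansion of the return time function and the normal form of $F$, both of which are in place here, so the conclusion transfers to $Y^t$ without modification.
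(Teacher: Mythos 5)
Your proposal is correct and is essentially the paper's own proof: the paper obtains this lemma in exactly the same way, by applying Lemma~\ref{lemma_4} to the reparametrized flow $Y^t$ with generator $Y=\frac{1}{\hat\phi}X_1$ and the $\gamma_n$ regarded as $Y^t$-periodic orbits, with the same identifications of $\mu$, $K^Y$ and $T'$ (via Lemma~\ref{lemma_2}). Your one worry is in fact moot --- $Y$ preserves the smooth volume $\hat\phi\,\omega$ whenever $X_1$ preserves $\omega$ --- but your alternative justification, that only the intrinsic Poincar\'e return map and the vanishing of the $Y$-return time on $W^s_{loc}(p)\cup W^u_{loc}(p)$ are used, is also valid.
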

\begin{remark} We recall that the fact that $T'$ can be recovered from the periods is the contents of Lemma~4.2.
\end{remark}

Recalling the behaviour of generalized longitudinal Anosov cocycle under reparametrizations and combining with above observations, the asymptotic formula of Lemma~5.3 can be rewritten in the following way for all sufficiently large $n$:
$$
T_n^{\phi}=nT_0^{\phi}+T'+c_nK^X_\phi(p, T_0)n\mu^n+O(\mu^n)
$$
This formula allows to recover the eigenvalue at $p$ from the sequence $\{T_n^{\phi}, n\ge 0\}$ if $K^X_\phi(p, T_0)\neq 0$. Using this observation, the same formula for the second flow $X_2^t$ and the matching $T_n^{\phi_1}=T_n^{\phi_2}$, one obtains the alternate conclusions of Theorem~\ref{thm_tech2} in exactly the same way as in the end of the proof of Theorem~\ref{thm_tech} (the arguments at the end of Section~\ref{sec4} after the proof of Lemma~\ref{lemma_4}). This finishes the proof of Theorem~\ref{thm_tech2}.
\end{proof}

We will need one more simple Lemma.

\begin{lemma}
\label{lemma_contact}
Let $Y^t\colon M\to M$ be a reparametrization of a contact flow whose longitudinal Anosov cocycle is trivial. Then $Y^t$ is also contact.
\end{lemma}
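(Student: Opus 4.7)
My plan is to combine the Foulon--Hasselblatt dichotomy applied to $Y^t$ with a topological obstruction for contact Anosov flows, and use the obstruction to rule out the suspension alternative.

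The plan starts by noting that, since the longitudinal Anosov cocycle $K^Y$ is assumed trivial, the Foulon--Hasselblatt theorem recalled in Section~\ref{sec_cocycle} together with Plante's theorem gives the dichotomy: either $Y^t$ is contact, which is the desired conclusion, or $E^s_Y \oplus E^u_Y$ is integrable and $Y^t$ is a constant roof suspension. So it suffices to exclude the second alternative under the assumption that $Y^t$ is a reparametrization $Y = X/\phi$ (with $\phi > 0$ smooth) of a contact Anosov flow $X^t$.

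Assume then that $Y^t$ is a constant roof suspension. Then $Y^t$ admits a smooth closed global cross-section $\Sigma\cong\T^2$ transverse to $Y$. Since $X = \phi Y$ is a positive reparametrization, $\Sigma$ is also transverse to $X$, so it is a global cross-section for $X^t$ as well. Thus the argument reduces to proving that a 3-dimensional contact Anosov flow cannot admit any closed orientable surface as a global cross-section.

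I expect this final topological step to be the key (and really the only nontrivial) part. The plan here is a Stokes argument: fix a contact form $\alpha$ whose Reeb vector field is $X$, so that $\alpha(X)=1$ and $\iota_X d\alpha = 0$. Because $\Sigma$ is transverse to $X$ and the Reeb condition kills $d\alpha$ in the $X$-direction, the restriction $(d\alpha)|_\Sigma$ coincides with the non-degenerate form $d\alpha|_{\ker\alpha}$ via the natural isomorphism $T\Sigma \cong TM/\R X \cong \ker\alpha$, and is therefore a nowhere vanishing $2$-form on the closed orientable surface $\Sigma$. This yields $\int_\Sigma (d\alpha)|_\Sigma \neq 0$. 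On the other hand, $(d\alpha)|_\Sigma = d(\alpha|_\Sigma)$ is exact, so Stokes gives $\int_\Sigma (d\alpha)|_\Sigma = 0$, a contradiction. The point to check carefully is the nondegeneracy of $(d\alpha)|_\Sigma$, which is where the Reeb property $\iota_X d\alpha=0$ and the transversality of $\Sigma$ to $X$ do the essential work.
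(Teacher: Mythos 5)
Your argument is correct, but the key step differs from the paper's. Both proofs start identically: triviality of $K^Y$ plus Foulon--Hasselblatt and Plante gives the dichotomy ``$Y^t$ contact or constant roof suspension,'' and the content is ruling out the suspension. The paper does this by citing that contact flows are homologically full~\cite[Theorem~2.9]{GRH2}, that homological fullness is invariant under reparametrization, and that suspensions are not homologically full --- a one-line argument given that machinery, which is also used elsewhere in the paper. You instead give a self-contained topological obstruction: a suspension has a closed surface transverse to $Y$, hence transverse to the Reeb field $X=\phi Y$, and for a Reeb field this is impossible because $d\alpha$ restricted to such a surface is nondegenerate (via $T\Sigma\cong TM/\R X\cong\ker\alpha$, using $\iota_Xd\alpha=0$) yet exact, contradicting Stokes. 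This is a classical and perfectly valid route; it has the advantage of being elementary and of proving the slightly stronger fact that no Reeb flow admits any closed transverse surface, while the paper's route avoids differential-form bookkeeping and stays within the Livshits/homological-fullness toolkit it already relies on. Note that both arguments exploit the same underlying point that the obstruction depends only on the oriented line field, hence passes to reparametrizations. One small regularity caveat in your version: the Foulon--Hasselblatt/Plante conclusion produces the transverse torus as a compact leaf of the $C^1$ distribution $E^s\oplus E^u$, so a priori $\Sigma$ is only $C^1$; this is harmless, since $\int_\Sigma d\alpha$ depends only on $[\Sigma]\in H_2(M;\R)$ (or one may smooth $\Sigma$ keeping transversality), so the exactness contradiction persists.
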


\begin{proof}
The Foulon-Hasselblatt theorem says that triviality of the longitudinal Anosov cocycle implies that $Y^t$ is either contact or a constant roof suspension of an Anosov diffeomorphism of $\T^2$. To rule out the latter case recall that contact flows are homologically full~\cite[Thorem~2.9]{GRH2}. The property of being homologically full persists under reparametrizations and suspension flows are not homologically full. Hence $Y^t$ cannot be a constant roof suspension flow.
\end{proof}

Now we proceed with the proof of Theorem~\ref{thm3}. 

By the Alternate Livshits Theorem, from Theorem~\ref{thm_tech2} we have that either $\phi_1$ is a coboundary or at least one of the cocycles $K_{\phi_1}$ and $K_{\phi_2}$ is a coboundary. If $\phi_1$ is coboundary then from the matching assumption $\phi_2$ integrates to zero over periodic orbits of $X_2^t$ and, hence, by the Livshits Theorem, we also have that $\phi_2$ is a coboundary, which finishes the proof in this case.
 
Hence we can now assume that $K_{\phi_1}$ is a coboundary (the case when $K_{\phi_2}$ is a coboundary is entirely symmetric). Pick a constant $c_0$ such that $\phi_1+c_0>0$. Since $X_1^t$ is contact we have that $c_0K=c_0K_1=K_{c_0}$ is a coboundary.
Hence, $K_{\phi_1+c_0}=K_{\phi_1}+K_{c_0}$ is also a coboundary.

Since $\phi_1+c_0>0$ we can consider the reparametrization $Y_1^t$ with generator $Y_1=\frac{1}{\phi_1+c_0}X_1$. By property~3 above we have $K^Y=K^X_{\phi_1+c_0}$. Hence the longitudinal Anosov cocyle of $Y^t$ is trivial and, using Lemma~\ref{lemma_contact}, we conclude that $Y^t$ is also a contact flow.

Now we apply~\cite[Theorem~7.1]{GRH2} which says that since $Y_1^t$ is a contact reparametrization of a contact flow $X_1^t$ then $\phi_1+c_1=C+\omega(X_1)$, where $\omega$ is a closed 1-form. Hence we have $\phi_1=c_0+\omega(X_1)$ for some constant $c_0$.

If $c_0=0$ then $\phi_1$ is an abelian a coboundary. Further, for any homologically trivial periodic orbit $\beta$, we have
$$
0=\langle [\omega],[\beta]\rangle=\int_\beta\omega(\dot\beta(t))dt=\int_\beta\phi_1(\beta(t))dt= \int_{H_*\beta}\phi_2(H_*\beta(t))dt,
$$
where $H_*\beta$ is the periodic orbit of $X_2^t$ which corresponds to $\beta$ under the orbit equivalence $H$. Since $H$ is homotopic to identity, we have that integrals of $\phi_2$ over every homologically trivial orbit of $X_2^t$ vanish. Then we can apply~\cite[Theorems~2.9~and~3.5]{GRH2} to conclude that $\phi_2$ is an abelian coboundary over $X_2^t$ which completes the proof in this case.

It remains to consider the case when $c_0>0$ (if $c_0<0$ we can pass to the matching pair $(-\phi_1,-\phi_2)$ which makes $c_0$ positive). Let $\eta$ be a closed 1-form which represents the cohomology class $H_*[\omega]$.

\begin{lemma} The pair of functions $(\bar\phi_1,\bar\phi_2)\stackrel{\mathrm{def}}{=}(c_0, \phi_2-\eta(X_2))$ is a matching pair.
\end{lemma}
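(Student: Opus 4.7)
The plan is to verify the matching identity $\int_\beta \bar\phi_1\, dt = \int_{H_*\beta}\bar\phi_2\, dt$ for every periodic orbit $\beta$ of $X_1^t$ by a direct calculation that combines the decomposition $\phi_1 = c_0 + \omega(X_1)$, the matching hypothesis for the original pair $(\phi_1,\phi_2)$, and the naturality of the homology-cohomology pairing under the homeomorphism $H$.

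First I would rewrite the left-hand side. Let $T$ denote the $X_1^t$-period of $\beta$. Since $\bar\phi_1 = c_0$ is constant,
$$
\int_\beta \bar\phi_1\, dt = c_0 T.
$$
On the other hand, using $\phi_1 = c_0 + \omega(X_1)$,
$$
\int_\beta \phi_1\, dt = c_0 T + \int_\beta \omega(X_1(\beta(t)))\, dt = c_0 T + \langle[\omega],[\beta]\rangle,
$$
because the line integral of a closed $1$-form over a closed curve equals the cohomology-homology pairing. The matching hypothesis for $(\phi_1,\phi_2)$ then gives
$$
\int_{H_*\beta}\phi_2\, dt = c_0 T + \langle[\omega],[\beta]\rangle.
$$

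Next I would handle the correction term. Again because $\eta$ is closed,
$$
\int_{H_*\beta}\eta(X_2(H_*\beta(t)))\, dt = \langle[\eta],[H_*\beta]\rangle.
$$
Since $H$ is a homeomorphism isotopic to the identity (as an orbit equivalence of Anosov flows on $M$), it acts trivially on (co)homology, or more generally $H_*[\beta] = [H_*\beta]$ and $[\eta] = H_*[\omega]$ by construction, so naturality of the pairing yields
$$
\langle[\eta],[H_*\beta]\rangle = \langle H_*[\omega], H_*[\beta]\rangle = \langle[\omega],[\beta]\rangle.
$$
Subtracting from the previous equation,
$$
\int_{H_*\beta}\bar\phi_2\, dt = \int_{H_*\beta}\phi_2\, dt - \langle[\omega],[\beta]\rangle = c_0 T = \int_\beta \bar\phi_1\, dt,
$$
which is the desired matching identity.

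The argument is essentially a bookkeeping exercise, and there is no real obstacle to overcome; the only subtlety worth flagging is to make sure the pairing identity $\langle[\eta],[H_*\beta]\rangle = \langle[\omega],[\beta]\rangle$ is invoked correctly. This is the same naturality computation used in Remark~(3) of the introduction, so it is already consistent with the framework in which the theorem is formulated.
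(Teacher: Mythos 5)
Your proof is correct and follows essentially the same route as the paper's: rewrite $c_0=\phi_1-\omega(X_1)$ along $\beta$, convert the line integrals of the closed forms into pairings, invoke the matching hypothesis for $(\phi_1,\phi_2)$, and use $[\eta]=H_*[\omega]$ together with naturality $\langle H_*[\omega],H_*[\beta]\rangle=\langle[\omega],[\beta]\rangle$. The only immaterial quibble is your aside that $H$ is isotopic to the identity and hence acts trivially on (co)homology --- an orbit equivalence need not be homotopic to the identity in general, but this is not needed since your fallback naturality computation is exactly the one the paper uses.
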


\begin{proof} Indeed, if $\beta$ is a periodic orbit of $X_1^t$ and $H_*\beta$ is the corresponding periodic orbit of $X_2^t$ then
\begin{multline*}
\int_\beta c_0 dt=\int_\beta(\phi_1(\beta(t)-\omega(\dot\beta(t)))dt
=\int_\beta\phi_1(\beta(t))dt-\langle[\omega],[\beta]\rangle\\
=\int_{H_*\beta}\phi_2(H_*\beta(t))dt-\langle[H_*\omega],[H_*\beta]\rangle
=\int_{H_*\beta}\phi_2(H_*\beta(t))dt-\langle[\eta],[H_*\beta]\rangle\\
= \int_{H_*\beta}\phi_2(H_*\beta(t))-\eta(X_2(H_*\beta(t)))dt
= \int_{H_*\beta}\bar\phi_2(H_*\beta(t))dt.
\end{multline*}
\end{proof}

\begin{lemma} The function $\bar \phi_2$ is cohomologous to a positive function.
\end{lemma}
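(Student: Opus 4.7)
The plan is to combine the matching identity from the previous lemma with compactness of the orbit equivalence to conclude that the average of $\bar\phi_2$ over every periodic orbit of $X_2^t$ is uniformly positive; a standard Livshits-style argument then promotes this to cohomology with a positive function.

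First I would establish a uniform bound $|H_*\beta|\le C|\beta|$ on the ratio of periods of corresponding orbits. Since $H$ is a $C^0$ orbit equivalence (moreover $C^r$ along orbits) between Anosov flows on the compact manifold $M$, one has a positive continuous time-change cocycle $\alpha\colon M\times\R\to\R$ defined by $H(X_1^t(x))=X_2^{\alpha(x,t)}(H(x))$, and compactness makes $\alpha(x,t)/t$ uniformly bounded for $t\ge 1$. Combined with the matching identity from the previous lemma this gives
$$
\int_{H_*\beta}\bar\phi_2(H_*\beta(t))\,dt \;=\; c_0|\beta|\;\ge\;\frac{c_0}{C}|H_*\beta|,
$$
so the average of $\bar\phi_2$ along every periodic orbit of $X_2^t$ is at least $c_0/C>0$.

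Since periodic-orbit measures are weak-$*$ dense among invariant probability measures (Anosov closing lemma), the bound $\int\bar\phi_2\,d\mu\ge c_0/C$ then holds for \emph{every} $X_2^t$-invariant probability measure $\mu$. A standard empirical-measure argument — extracting a weak-$*$ limit of Birkhoff-average measures along any purported witness sequence — produces a time $T_0>0$ for which
$$
\tilde\phi_2(x):=\frac{1}{T_0}\int_0^{T_0}\bar\phi_2(X_2^t(x))\,dt\;\ge\;\frac{c_0}{2C}
$$
uniformly in $x\in M$. A direct computation yields $\tilde\phi_2-\bar\phi_2 = X_2 U$ for the explicit smoothing function $U(x)=\int_0^{T_0}\tfrac{T_0-s}{T_0}\bar\phi_2(X_2^s(x))\,ds$, so $\bar\phi_2$ is cohomologous to the positive function $\tilde\phi_2$.

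The main (and only non-trivial) obstacle is the uniform control $|H_*\beta|/|\beta|\le C$, which is precisely the point where one must use that $H$ is a genuine orbit equivalence on the compact $M$; once this is in hand the remainder is routine hyperbolic-dynamics machinery.
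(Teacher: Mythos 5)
Your proposal is correct and follows essentially the same route as the paper: the matching identity plus a uniform bound on the period ratio gives a uniform lower bound on the average of $\bar\phi_2$ over all periodic-orbit measures, density of these among invariant measures extends the bound, and a long-time Birkhoff average $\tilde\phi_2$ is then positive and cohomologous to $\bar\phi_2$. Your only deviation is writing the explicit transfer function $U$ instead of invoking the Livshits theorem for the coboundary identity, which is a harmless (indeed slightly more self-contained) variant.
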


\begin{proof}
Denote by $\mu_\beta$ the invariant probability measure supported on a periodic orbit $\beta$ of $X_1^t$ and, similarly, by $\mu_{H_*\beta}$ the invariant probability measure supported on $H_*\beta$. Then, using the preceding lemma, we have
\begin{multline*}
\int\bar\phi_2d\mu_{H_*\beta}=\frac{1}{\mathrm{per}(H_*\beta)}\int_{H_*\beta}\bar\phi_2(H_*\beta(t))dt
\\=
\frac{1}{\mathrm{per}(H_*\beta)}\int_{\beta}\bar\phi_1(\beta(t))dt
= \frac{\mathrm{per}(\beta)} {\mathrm{per}(H_*\beta)}\int \phi_1d\mu_\beta
=\frac{c_0 \mathrm{per}(\beta)} {\mathrm{per}(H_*\beta)}
\end{multline*}
The ratio of periods is uniformly bounded from below since derivative along the flow lines of the orbit equivalence $H$ is uniformly bounded from below. Hence we have a constant $c>0$ such that
$$
\int\bar\phi_2d\mu\ge c
$$
for any invariant probability measure $\mu$ supported on a periodic orbit. Since for Anosov flows such measures are dense in the space of ergodic measures we also have this bound for all ergodic probability measures of $X_2^t$.

With this inequality at hand we claim that $\bar\phi_2^T$ given by
$$
\bar\phi_2^T(x)=\frac 1T\int_0^T\bar\phi_2(X_2^t(x))dt
$$
is the posited function provided that $T$ is chosen to be sufficiently large. The fact that $\bar\phi_2^T$ is cohomologous to $\bar\phi_2$ follows from the Livshits Theorem and vanishing of the integrals of $\bar\phi_2^T-\bar\phi_2$ over periodic orbits as can be easily verified by a calculation.

The fact that $\bar\phi_2^T$ is positive for all sufficiently large $T$ is also standard and we only sketch the proof leaving the details to an interested reader. The standard approach is to argue reductio ad absurdum and to assume that $\bar\phi_2^{T_k}(x_k)\le 0$ for a sequence of points $\{x_k\}$ and a sequence of times $T_k\to\infty$. Then, using the Cantor diagonal argument one obtains an invariant probability measure $\mu$ such that $\int\bar\phi_2d\mu\le 0$ contradicting the established bound. This proof is analogous to the proof of uniform converegence of ergodic averages for uniquely ergodic systems~\cite[Proposition~4.1.13]{KH}.
\end{proof}

Using preceding lemma we replace $\bar\phi_2$ with a positive function cohomologous to it, and we still denote it by $\bar\phi_2$.
To summarize, we have now a matching pair $(\bar\phi_1,\bar\phi_2)$, where $\bar\phi_1=c_0>0$ and $\bar\phi_2>0$. This allows us to consider reparametrized flows $Y_i^t$ with generators given by $Y_i=\frac{1}{\bar\phi_i}X_i$, $i=1,2$. The periods of periodic orbits of $Y_i^t$ are calculated by integrating $\bar\phi_i$ over the periodic orbits of $X_i^t$. Hence, the matching assumption
$$
\int_\beta \bar\phi_1(\beta(t))dt= \int_{H_*\beta} \bar\phi_1(H_*\beta(t))dt
$$
says that the periods of matching orbits of $Y_1^t$ and $Y_2^t$ are equal. Hence, by \cite[Theorem 19.2.9]{KH}, we can improve the orbit equivalence $H$ to a conjugacy $\bar H$, $\bar H \circ Y_1^t = Y_2^t \circ\bar H$. Then Theorem~\ref{thm2} applies to $Y_1^t$ and $Y_2^t$. Since suspension flows do not admit contact reparametrizations by the proof of Lemma~\ref{lemma_contact}, Theorem~\ref{thm2} yields smoothness of $\bar H$, which is the posited smooth orbit equivalence of $X_1^t$ and $X_2^t$.

\section{Proof of Corollaries~\ref{cor5},~\ref{cor3} and~\ref{cor4}}

\begin{proof}[Proof of Corollary~\ref{cor5}]

Let $X_i=a_iY_i$ be the vector fields which generate contact reparametrizations $X_i^t$, $i=1,2$. By the change of variable formula the main matching assumption gives 
 $$
 \int_\beta \frac{\phi_1(\beta(t))}{a_1(\beta(t))}dt=\int_{H_*\beta}\frac{\phi_2(H_*\beta(t))}{a_2(H_*\beta(t))}dt
 $$
 for every periodic orbit $\beta$ of $X_1^t$ and corresponding periodic orbit $H_*\beta$ for $X_2^t$. Then we can apply Theorem~\ref{thm3} to $X_1^t$ and $X_2^t$ which yields the dichotomy. If $X_1^t$ is $C^{r_*}$-smoothly orbit equivalent to $X_2^t$ then the same orbit equivalence is a $C^{r_*}$ orbit equivalence between $Y_1^t$ and $Y_2^t$. Otherwise we have that $\phi_i/a_i$ are abelian coboundaries over $X_i^t$, $i=1,2$. This means that there exists closed 1-forms $\omega_i$ such that 
 $\phi_i/a_i=\omega_i(X_i)$. Hence $\phi_i=a_i\omega(X_i)=\omega(Y_i)$ and we have that $\phi_i$ are abelian coboundaries over $Y_i^t$, $i=1,2$.
\end{proof}

Now we prove Corollaries~\ref{cor3} and~\ref{cor4}.

\begin{proof} Denote by $X_i^t$ the geodesic flows of $g_i$ and by $X_i$ their generating vector fields, $i=1,2$. These flows are contact Anosov flows. It is also well known that $X_1^t$ is orbit equivalent to $X_2^t$ via an orbit equivalence $H$ which is homotopic to $id_{T^1S}$.  Since we have assumed that $\phi_1$ is not an abelian coboundary over $X_1^t$ we have that $H$ is smooth.

Now existence of homothety follows from Otal's proof of marked length spectrum rigidity~\cite{otal}. The key step in Otal's proof is to show that conjugacy of geodesic flows sends the Liouville measure invariant under the first flow to the Liouville measure invariant under the second flow. In fact, what is important is matching of Liouville currents on the spaces of geodesic on the universal covers $(\tilde S, \tilde g_1)$ and $(\tilde S,\tilde g_2)$. While we don't have a conjugacy and the orbit equivalence is ambiguous in the flow direction it does induce a canonical map $H_*$ on the space of currents. Hence, if $m_2$ is the Liouville current for $(\tilde S,\tilde g_2)$ then $H^*m_2$ is an invariant current for $(\tilde S, \tilde g_1)$. Further, since $H$ is smooth $H^*m_2$ is absolutely continuous and, hence, by ergodicity, is proportional to the Liouville current $m_1$ for $(\tilde S, \tilde g_1)$. We have $H^*m_2=cm_1$. Hence, after replacing $g_1$ with $c^2g_1$ we have that $H$ matches the Liouville currents. From this Otal's proof~\cite[Section~2]{otal} gives the posited isometry.

Alternatively this last step can be done by directly citing the Croke-Otal theorem and using our recent work on rigidity of contact flows. Because $H$ is smooth we can apply~\cite[Theorem 7.1]{GRH2} which says that $X_1^t$ is conjugate to a reparametrization of $X_2^t$ given by the vector field $\frac{1}{c+\omega(X)}X_2$. Further, recall that geodesic flows are contact and have zero Sharp's minimizers~\cite[Section~6]{GRH2}. Since $H^*(0)=0$ we, in fact, have that $\omega=0$~\cite[Theorem 7.1]{GRH2}. Hence $X_1^t$ is conjugate to a constant rescaling of $X_2^t$. Therefore the Otal and Croke theorem applies to these flows and yields an isometry.

To check the additional claim of Corollary~\ref{cor4} we will use the fact that the isometry $f\colon(S,c^2g_1)\to (S, g_2)$ can be chosen to be homotopic to $id_S$ (this is a part of the conclusion of the Otal and Croke theorem). This implies that $f(\gamma(g_1))=\gamma(g_2)$  and $f'=\frac1c$ when restricted to the geodesic.
\begin{multline*}
0=\int_{\gamma(g_1)}\phi_1(\dot\gamma(g_1)(t))dt-\int_{\gamma(g_2)}\phi_2(\dot\gamma(g_2)(t))dt\\
=\int_{\gamma(g_1)}\phi_1(\dot\gamma(g_1)(t))dt-\int_{\gamma(g_1)}\frac1c\phi_2(Df(\dot\gamma(g_1)(t)))dt\\
=\frac1c\int_{\gamma(g_1)}(c\phi_1-\phi_2\circ Df)(\dot\gamma(g_1)(t))dt.
\end{multline*}

By Livshits theorem we conclude that $c\phi_1-\phi_2\circ Df$ is a coboundary
$$
c\phi_1-\phi_2\circ Df=X_1u
$$
for some $u\colon T^1S\to\R$. We also have that $c\phi_1-\phi_2\circ Df=(c\psi_1-\psi_2\circ f)\circ \pi$, which means that this function is, in fact, a function on $S$. Then we can apply a result of Croke-Sharafutdinov~\cite[Corollary 1.4]{CS} to conclude that $c\phi_1-\phi_2\circ Df=c\psi_1-\psi_2\circ f=0$. Alternatively, using Fourier decomposition of $L^2(T^1S, vol)$ one can derive the same conclusion from an earlier result of Guillemin-Kazhdan~\cite[Theorem~3.6]{GK}.
\end{proof}

\appendix

\section{Proof of Theorem~\ref{thm_livshits}}


We begin by introducing some notation. Given a H\"older continuous cocycle $B\colon M\times\R\to\R$ and a periodic orbit $\gamma$ we will write $B(\gamma)$ for the value $B(p,|\gamma|)$, where $p\in\gamma$ and $|\gamma|$ is the smallest period of $p$, $X^{|\gamma|}(p)=p$. Also denote by $\delta_\gamma$ the invariant measure supported on $\gamma$ of total mass $|\gamma|$. Let
$$
\mu_T(B)=\frac{1}{\sum_{\gamma\in\cP_T}|\gamma|e^{B(\gamma)}} \sum_{\gamma\in\cP_T} e^{B(\gamma)}\delta_{\gamma},
$$
where $\cP_T$ is the set periodic orbits of length $\le T$. Bowen formula says that the probability measures $\mu_T(B)$ converge in weak$^*$ topology to the equilibrium state $\mu_B$ of the cocycle $B$ as $T\to\infty$~\cite{B, PP}.

Given a vector $\bar\alpha=(\alpha_1,\alpha_2,\ldots \alpha_N)\in\R^N$ let
$$
\mu(\bar\alpha)=\mu\left(\sum_{i=1}^N\alpha_ia_i\right)
$$
and let
$$
\mu_T(\bar\alpha)=\mu_T\left(\sum_{i=1}^N\alpha_ia_i\right).
$$
Denote by $A_i$ be a set of periodic orbits $\gamma$ such that $a_i(\gamma)=0$. If several cocycles vanish on $\gamma$ then we assign $\gamma$ to only one of the sets so that the sets $A_i$ are all mutually disjoint. Then according to the main assumption $\cP_T=\cup_{i=1}^N\cP_T\cap A_i$. Then we can also define approximating probability measures supported on $A_i$  as follows
$$
\mu_T^{A_i}(B)=\frac{1}{\sum_{\gamma\in\cP_T\cap A_i}|\gamma|e^{B(\gamma)}} \sum_{\gamma\in\cP_T\cap A_i} e^{B(\gamma)}\delta_{\gamma}
$$
and
$$
\mu_T^{A_i}(\bar\alpha)=\mu^{A_i}_T\left(\sum_{i=1}^N\alpha_ia_i\right).
$$
Because $a_i$ vanishes on $A_i$ we have that $\mu_T^{A_i}(\bar\alpha)$ is constant in the $i$-th variable $\alpha_i$.

Obviously, we have the following formula
$$
\mu_T(\bar\alpha)=\sum_{i=1}^Ns_T^i\mu_T^{A_i}(\bar\alpha)
$$
with
$$
s_T^i=\frac{\sum_{\gamma\in\cP_T\cap A_i}|\gamma|e^{\sum_i\alpha_ia_i(\gamma)}}{\sum_{\gamma\in\cP_T}|\gamma|e^{\sum_i\alpha_ia_i(\gamma)}}.
$$
Clearly $\sum_is_T^i=1$. By compactness of $[0,1]$ and the space of probability measures in the weak$^*$ topology we can choose a sequence $T_k\to\infty$, $k\to\infty$ such that for all $i$ we have $s_{T_k}^i\to s^i$ and $\mu_{T_k}^{A_i}(\bar\alpha)\to\mu^{A_i}(\bar\alpha)$ as $k\to\infty$. By passing to the limit in the above formula we obtain a decomposition of the equilibrium state as a convex combination
$$
\mu(\bar\alpha)=\sum_{i=1}^Ns^i\mu^{A_i}(\bar\alpha).
$$
Since equilibrium state is ergodic and all $\mu^{A_i}(\bar\alpha)$ are invariant probability measures, if all of them are distinct we immediately conclude that all but one of the $s^i$ vanish.
Therefore we have $\mu(\bar\alpha)=\mu^{A_i}(\bar\alpha)$ for some $i$.

If some of the measures $\mu^{A_i}(\alpha)$ coincide, say $\mu^{A_i}(\bar\alpha)=\mu^{A_j}(\bar\alpha)$, then (even though coefficients $s^i$ and $s^j$ may be non-trivial), we still have by ergodicity that $\mu(\bar\alpha)=\mu^{A_i}(\bar\alpha)=\mu^{A_j}(\bar\alpha)$.  

Now we consider all $N$-tuples $\bar \alpha$ from the set $\{1,2,\ldots N+1\}^N$. Since this is a finite set we can find a sequence $T_k$ such that all coefficients and all sequences of measures $\mu_{T_k}^{A_i}(\bar\alpha)$, $\bar\alpha\in\{1,2,\ldots N+1\}^N$ converge to $\mu^{A_i}(\alpha)$ as $k\to\infty$. 

By the preceding discussion for every $\alpha=\{1,2,\ldots N+1\}^N$ we have $\mu(\alpha)=\mu^{A_i}(\alpha)$ for some $i\in[1,N]$. In the case when $\mu(\alpha)=\mu^{A_i}$ for several $i$-s we pick just one of them so that the a function $I\colon\bar\alpha\to \mu^{A_i}(\alpha)$ is defined. The domain of this function is $\{1,2,\ldots N+1\}^N$ and the range is the space of all measures $\mu_T^{A_i}$. This space can be combinatorially parametrized as
$$
\bigcup_{i=1}^N C_i\stackrel{\textup{def}}{=}\bigcup_{i=1}^N \{(i,\bar\alpha): \alpha\in \{1,2,\ldots N+1\}^N, \alpha_i=1\}
$$
(In fact, from the defintion of $I$, this is the rigorous definition of the range of $I$ because even when $\mu^{A_i}(\bar\alpha)=\mu^{A_k}(\bar\beta)$ we still consider them as different points in the range if $i\neq k$ or $\bar\alpha\neq\bar\beta$, or both.)
Recall that  $\mu_T^{A_i}(\bar\alpha)$ is constant in $i$-th coordinate and we also have, by taking the limit, that $\mu^{A_i}(\bar\alpha)$ does not depend on the $i$-th coordinate. Hence we can set $\alpha_i=1$ in the above definition of the set $C_i$.

\begin{lemma} There exist $\bar\alpha$ and $\bar\beta\in \{1,2,\ldots N+1\}^N$ and $i\in[1, N]$ such that
\begin{enumerate}
\item $I(\bar\alpha)=I(\bar\beta)\in C_i$;
\item $\alpha_i\neq\beta_i$;
\item $\alpha_j=\beta_j$ for all $j\neq i$.
\end{enumerate}
\end{lemma}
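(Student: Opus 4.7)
The plan is to prove the lemma by a double application of the pigeonhole principle, exploiting the key structural fact recorded just above the lemma: for any $\bar\alpha \in I^{-1}(C_i)$, the image $I(\bar\alpha)$ depends only on the coordinates $\alpha_j$ with $j\neq i$, because $\mu^{A_i}(\bar\alpha)$ is constant in its $i$-th argument and because the $C_i$ are, by construction, indexed by tuples with $\alpha_i=1$.

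First I would partition the domain. Set $S_i = I^{-1}(C_i)$ for $i = 1,\ldots,N$. Since the $C_i$ carry the index $i$ as part of their parametrization, they are pairwise disjoint, and since $I$ takes values in $\bigcup_i C_i$, the sets $S_1,\ldots,S_N$ partition $\{1,2,\ldots,N+1\}^N$. That domain has $(N+1)^N$ elements distributed among $N$ blocks, so pigeonhole produces an index $i$ with
$$
|S_i| \;\ge\; \frac{(N+1)^N}{N} \;>\; (N+1)^{N-1}.
$$

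Next, consider the projection $\pi_i\colon S_i \to \{1,2,\ldots,N+1\}^{N-1}$ obtained by deleting the $i$-th coordinate. Its target has exactly $(N+1)^{N-1}$ elements, which is strictly smaller than $|S_i|$, so $\pi_i$ cannot be injective. Hence there exist distinct $\bar\alpha,\bar\beta\in S_i$ with $\pi_i(\bar\alpha) = \pi_i(\bar\beta)$; equivalently, $\alpha_j = \beta_j$ for every $j\neq i$, and therefore $\alpha_i\neq\beta_i$. This already secures conclusions (2) and (3) of the lemma, and places $\bar\alpha,\bar\beta$ in the same block $S_i$, so $I(\bar\alpha),I(\bar\beta)\in C_i$.

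Finally, I would verify conclusion (1): $I(\bar\alpha) = I(\bar\beta)$. By definition of $I$ on $S_i$, each of these images is the element of $C_i$ obtained from the corresponding tuple by replacing the $i$-th coordinate with $1$; i.e., $I(\bar\alpha) = (i,\alpha_1,\ldots,\alpha_{i-1},1,\alpha_{i+1},\ldots,\alpha_N)$ and similarly for $\bar\beta$. Since $\alpha_j=\beta_j$ for all $j\neq i$, these two $N$-tuples agree, so $I(\bar\alpha)=I(\bar\beta)$, completing the proof. The argument is essentially pure combinatorics; the only conceptual point is to translate the fact that $\mu^{A_i}(\bar\alpha)$ is independent of $\alpha_i$ into the statement that $C_i$ is effectively $(N+1)^{N-1}$-sized, after which the counting inequality $|S_i|>(N+1)^{N-1}$ forces the desired coincidence. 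There is no serious obstacle here beyond setting up the bookkeeping correctly.
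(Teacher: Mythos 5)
Your proof is correct and is essentially the paper's argument: both rest on the observation that $I(\bar\alpha)=(i,\alpha_1,\ldots,\alpha_{i-1},1,\alpha_{i+1},\ldots,\alpha_N)$ together with the count $(N+1)^N>N(N+1)^{N-1}$ and the pigeonhole principle. The only difference is cosmetic — you run the pigeonhole in two stages (first picking a large block $S_i=I^{-1}(C_i)$, then projecting away the $i$-th coordinate), whereas the paper applies it once to the map $I$ itself to get two preimages of a single point, which yields the same conclusion.
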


Using this elementary lemma we can finish the proof. For $\bar\alpha$ and $\bar\beta$ given by the lemma we have $\mu(\bar\alpha)=\mu(\bar\beta)=\mu^{A_i}(\bar\alpha)$. The difference of cocycles is
$$
(\alpha_1a_1+\alpha_2a_2+\ldots +\alpha_Na_N)-(\beta_1a_1+\beta_2a_2+\ldots +\beta_Na_N)=(\alpha_i-\beta_i)a_i,
$$
Since equilibrium states for these cocycles are equal we can use a theorem of Bowen~\cite[Theorem~1.28]{B} to conclude that the difference $(\alpha_i-\beta_i)a_i$ is cohomologous to zero. Since $\alpha_i\neq\beta_i$ we obtain that $a_i$ is a coboundary as posited by the Alternate Livshits Theorem.

It remains to prove the lemma.
\begin{proof} From the definition of $I$, we have 
$$
I(\alpha_1,\alpha_2,\ldots \alpha_N)=(i,\alpha_1,\ldots \alpha_{i-1},1,\alpha_{i+1},\ldots \alpha_N)
$$
 Therefore the lemma simply says that $I$ has at least two preimages of some element. Hence we can use a counting argument. The domain of $I$ has $(N+1)^N$ element. Each $C_i$ has $(N+1)^{N-1}$ elements. So the cardinality of the range is $N(N+1)^{N-1}$. Since 
$(N+1)^N>N(N+1)^{N-1}$, the lemma follows from the pigeonhole principle.
\end{proof}

\end{document}